\documentclass[11pt]{article}
\usepackage[french, english]{babel}
\usepackage[utf8]{inputenc}
\usepackage{amsmath,amsfonts,amssymb,amsthm,mathrsfs}
\usepackage{minitoc}
\usepackage{bbm}
\renewcommand{\leq}{\leqslant}
\renewcommand{\geq}{\geqslant}
\usepackage[mono=false]{libertine}
\useosf
\usepackage{mathpazo}

\usepackage{wasysym}

\usepackage[dvipsnames]{xcolor}
\usepackage[a4paper,vmargin={3.5cm,3.5cm},hmargin={2cm,2cm}]{geometry}
\usepackage[font=sf, labelfont={sf,bf}, margin=1cm]{caption}
\usepackage{graphicx,graphics}
\usepackage{epsfig}
\usepackage{latexsym}
\linespread{1.2}
\usepackage{stmaryrd}
\usepackage{ae,aecompl}

\usepackage[english]{babel}
 \usepackage[colorlinks=true]{hyperref}
\usepackage{pstricks}
\usepackage{enumerate}
\usepackage{tikz}
\usepackage{fontawesome}
\usetikzlibrary{arrows,automata}

\renewcommand{\P}{\mathbb{P}}
\newcommand{\E}{\mathbb{E}}
\DeclareFixedFont{\beaupetit}{T1}{ftp}{b}{n}{2cm}

\newtheorem{theorem}{Theorem}[]
\newtheorem{definition}{Definition}

\newtheorem{proposition}[definition]{Proposition}

\newtheorem{lemma}[definition]{Lemma}

\theoremstyle{definition}

\newtheorem*{remark}{Remark}

\newcommand{\var}{\mathrm{Var}}
\newcommand{\eps}{\varepsilon}

\usepackage{todonotes}




\renewcommand{\H}{\mathbb{H}}

\tikzstyle{every node}=[circle, draw, fill=black!30, inner sep=0pt, minimum width=4pt]

\tikzstyle{texte}=[draw=none, fill=none]


\title{\textsc{On Cheeger constants of hyperbolic surfaces}}
\author{
Thomas \textsc{Budzinski}\thanks{ENS de Lyon.\hfill  \href{mailto:thomas.budzinski@ens-lyon.fr}{\texttt{thomas.budzinski@ens-lyon.fr}}}\qquad\&\qquad
Nicolas \textsc{Curien}\thanks{Universit\'e Paris-Saclay.\hfill  \href{mailto:nicolas.curien@gmail.com}{\texttt{nicolas.curien@gmail.com}}}
\qquad\&\qquad
Bram \textsc{Petri}\thanks{Sorbonne Universit\'e.\hfill  \href{mailto:bram.petri@imj-prg.fr}{\texttt{bram.petri@imj-prg.fr}}}}
\date{}
\begin{document}
\maketitle 

\vspace{-0.5cm}
\begin{abstract} It is a well-known result due to Bollobas that the maximal Cheeger constant of large $d$-regular graphs cannot be close to the Cheeger constant of the $d$-regular tree. We  prove analogously that the Cheeger constant of closed hyperbolic surfaces of large genus is bounded from above by $2/\pi \approx 0.63...$ which is strictly less than the Cheeger constant of the hyperbolic plane. The proof uses a random construction based on a Poisson--Voronoi tessellation of the surface with a vanishing intensity.\end{abstract}

\begin{figure}[!h]
 \begin{center}
 \includegraphics[height=4.5cm]{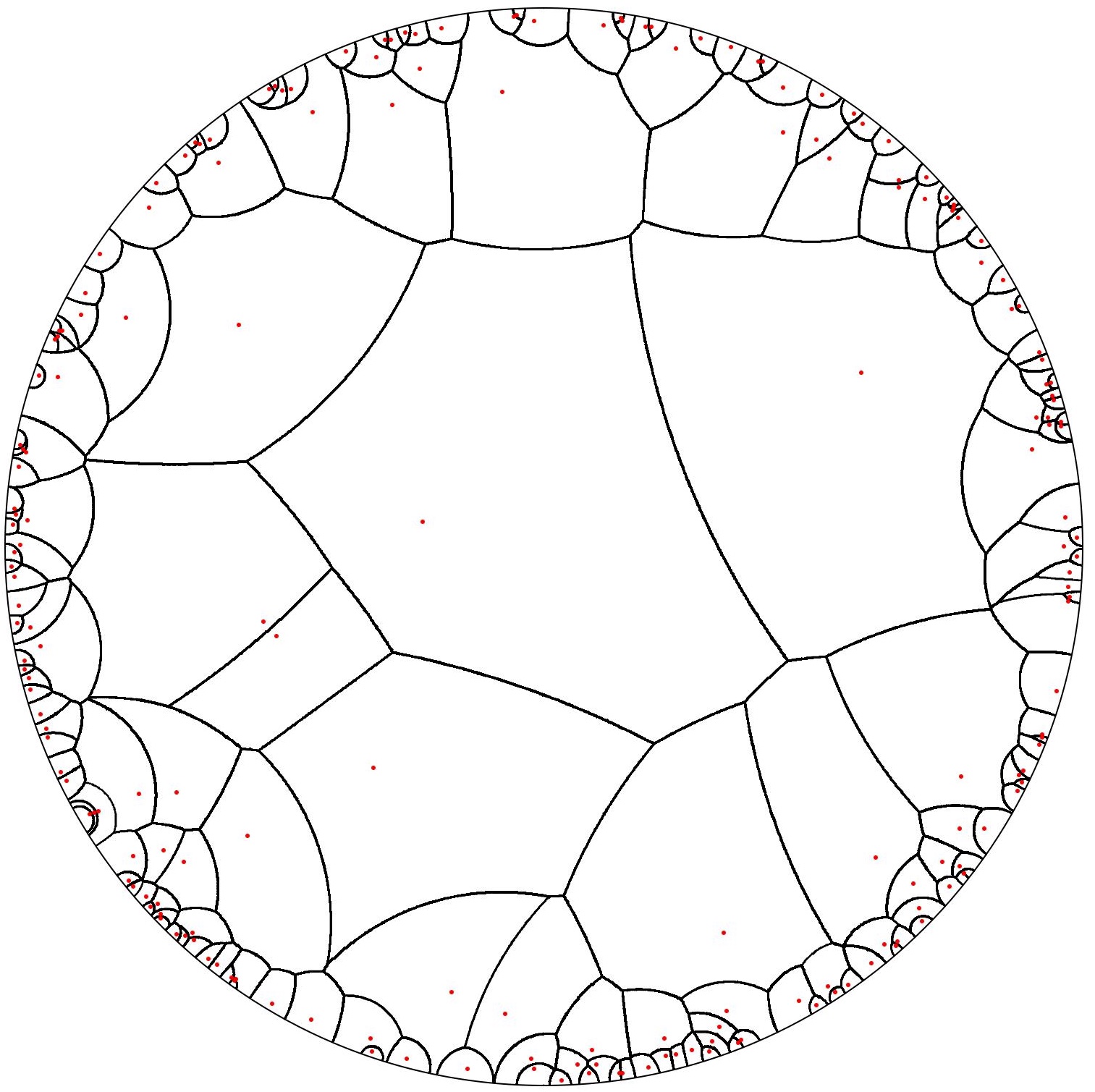} \hspace{0.5cm}
  \includegraphics[height=4.5cm]{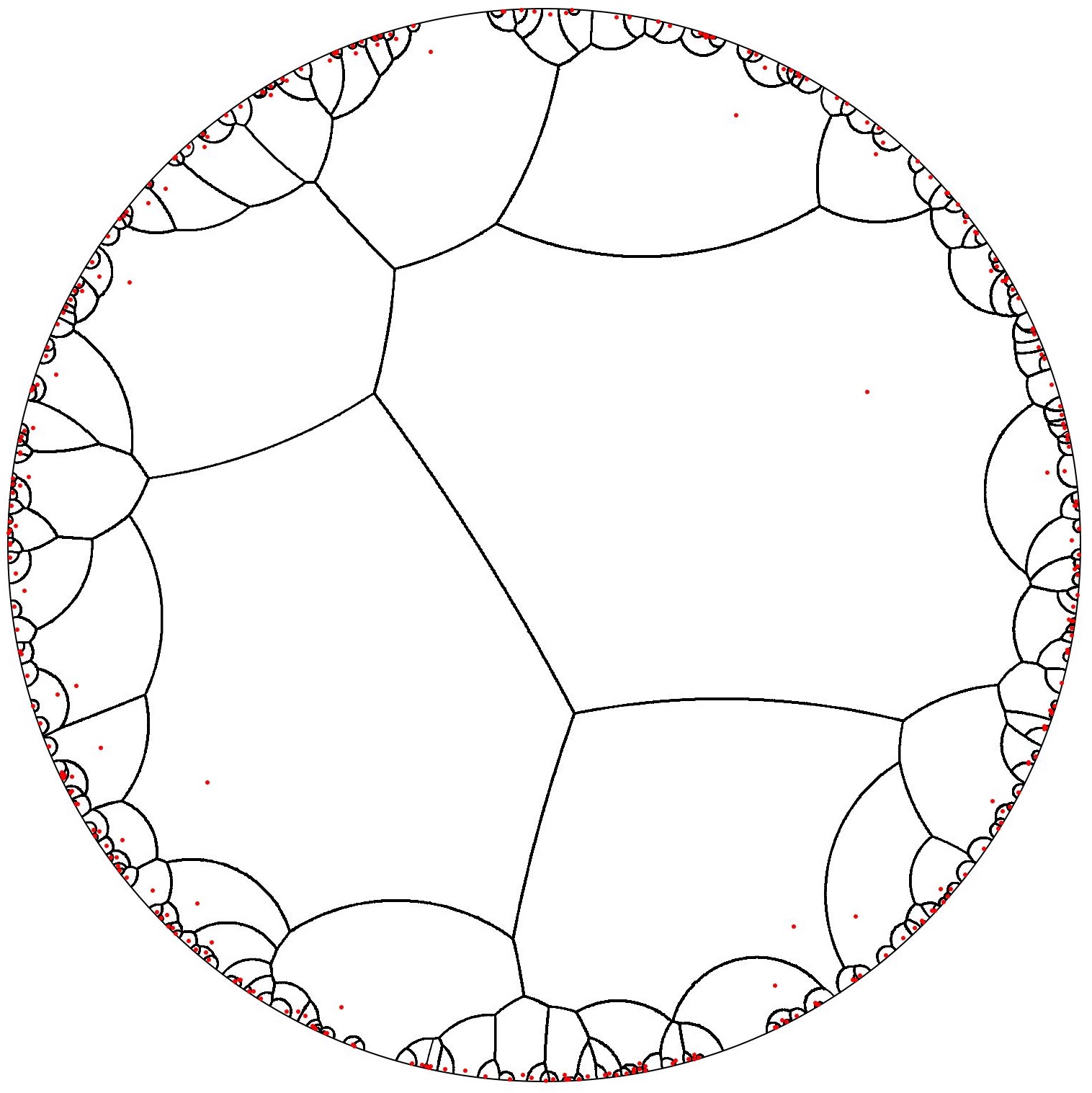} \hspace{0.5cm}
   \includegraphics[height=4.5cm]{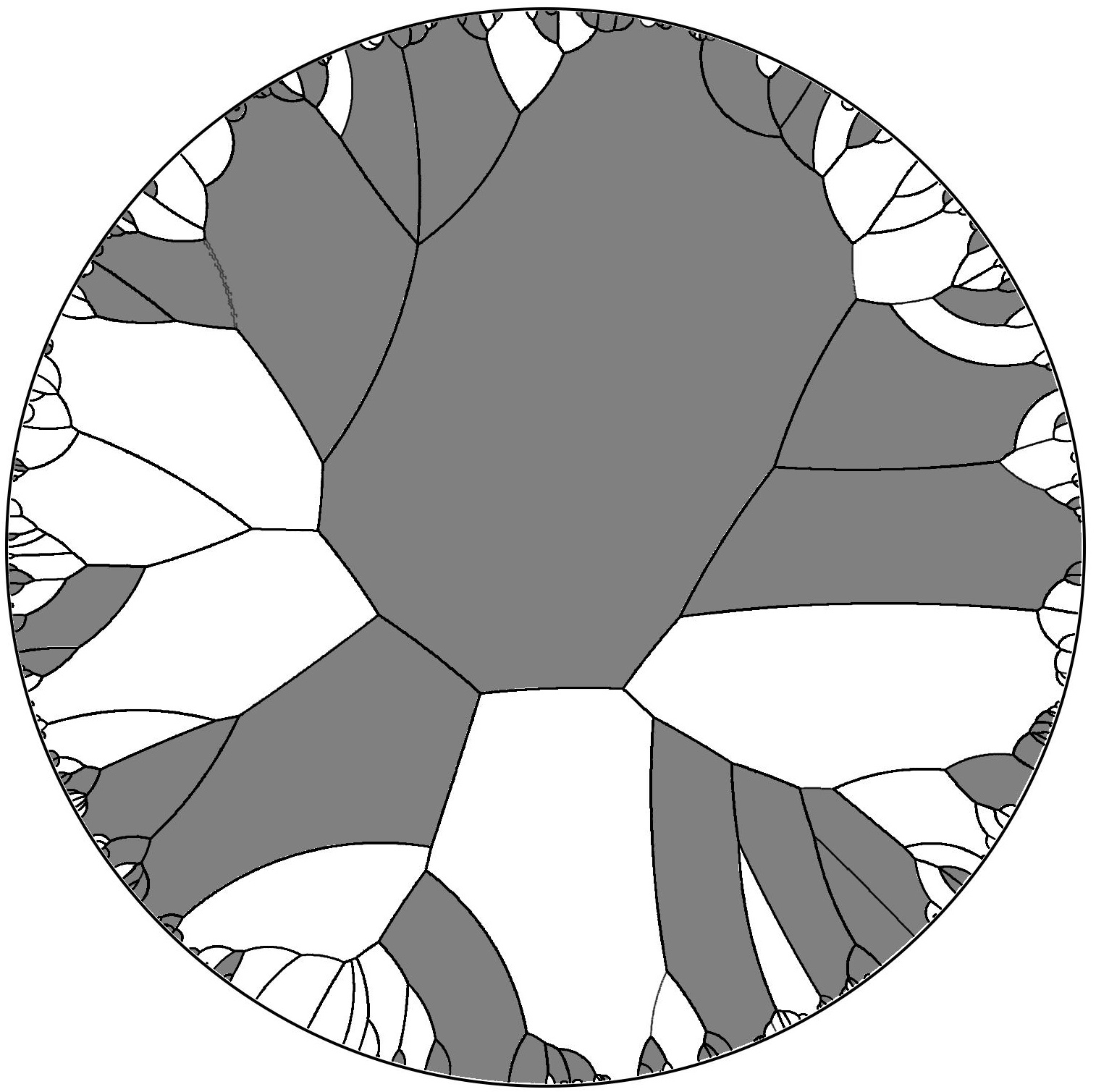}
 \caption{From left to right: Poisson--Voronoi tessellations of the hyperbolic plane with decreasing intensity. Their limit (on the right) is the  \emph{pointless Voronoi tessellation} of the hyperbolic plane whose cells have been colored in black/white uniformly at random. This object has an average "linear" density equal to $2 \times \frac{2}{\pi}$ per unit of area. \label{fig:voronoiH}}
 \end{center}
 \end{figure}

\vspace{-1cm}
\section{Introduction}
Let $ \mathcal{S}$ be a closed hyperbolic surface. If $A \subset \mathcal{S}$ is a subset of $ \mathcal{S}$, we define $ h^{*}(A) = |\partial A| / |A|$ where $| \partial A|$ is the length of its boundary and $|A|$ is its area. If one of these quantities is not well defined, then $h^{*}(A) = + \infty$ by convention. The \emph{Cheeger constant} of $ \mathcal{S}$ is 
 \begin{eqnarray} \label{eq:defcheeger} h( \mathcal{S}) = \inf \left\{h^{*}(A) | A \subset \mathcal{S} \mbox{ with } |A| \leq \, | \, \mathcal{S}|/2 \right\}.  \end{eqnarray}
\begin{theorem} \label{thm:main} For $g \geq 2$, let $ \mathcal{M}_{g}$ be the moduli space of all isometry classes of closed hyperbolic surfaces of genus $g$. Then we have 
$$ \limsup_{g \to \infty} \sup_{ \mathcal{S} \in \mathcal{M}_{g}} h( \mathcal{S}) \leq \frac{2}{\pi}.$$
\end{theorem}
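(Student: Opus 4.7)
The strategy suggested by the abstract and by Figure~\ref{fig:voronoiH} is probabilistic: for every $\mathcal{S}\in\mathcal{M}_g$ with $g$ large, I would construct a random subset $A\subset\mathcal{S}$ by sampling a Poisson point process $\Pi$ of intensity $\lambda=\lambda(g)$ on $\mathcal{S}$, forming its Voronoi tessellation $V$, independently $2$-colouring each cell uniformly at random, and letting $A$ be the union of the black cells. The intensity should vanish as $g\to\infty$ (so as to fall into the ``pointless'' regime depicted in the rightmost panel of Figure~\ref{fig:voronoiH}), but not too fast: the plan is to take $\lambda\to 0$ while keeping $\lambda\,|\mathcal{S}|=4\pi(g-1)\lambda\to\infty$.

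Two expectations are then immediate. By the colour symmetry, $\E[|A|]=|\mathcal{S}|/2$. The boundary $\partial A$ is exactly the union of those edges of $V$ whose two adjacent cells received different colours, and since each edge is of this type with probability $1/2$ independently of $V$,
\[
\E[|\partial A|] \;=\; \tfrac{1}{2}\,\E[\mathrm{length}(V)].
\]
The crucial analytic input is the identification of the edge-length density of the PVT on $\mathcal{S}$,
\[
\lim_{g\to\infty}\;\frac{\E[\mathrm{length}(V)]}{|\mathcal{S}|} \;=\; \frac{2}{\pi},
\]
uniformly in $\mathcal{S}\in\mathcal{M}_g$, provided $\lambda(g)$ tends to $0$ sufficiently slowly. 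Granted this, $\E[|\partial A|]/\E[|A|]\to 2/\pi$.

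To pass from expectations to a single good realisation I would control the variances of $|A|$ and $|\partial A|$. Conditionally on $V$, $|A|=\sum_{C}|C|\,\mathbbm{1}_{C\text{ black}}$, so $\mathrm{Var}(|A|\mid V)=\tfrac14\sum_C|C|^2$; because the typical Voronoi cell has area of order $1/\lambda$ and there are $\sim \lambda|\mathcal{S}|$ of them, this is of order $|\mathcal{S}|/\lambda$, which is $o(|\mathcal{S}|^2)$ as soon as $\lambda|\mathcal{S}|\to\infty$. A similar second-moment bound for $|\partial A|$ should follow from the Poisson--local structure. Chebyshev's inequality then yields, with probability bounded away from $0$, both $|A|=(1/2+o(1))|\mathcal{S}|$ and $|\partial A|=(1/\pi+o(1))|\mathcal{S}|$. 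Replacing $A$ by $\mathcal{S}\setminus A$ if necessary (which leaves $|\partial A|$ unchanged and enforces $|A|\leq|\mathcal{S}|/2$) produces a set with $h^*(A)\leq 2/\pi+o(1)$, and letting $g\to\infty$ finishes the proof.

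The central obstacle is the edge-density computation. In the universal cover $\H^2$, the limiting density can be identified via the Palm--Slivnyak formula: conditioning on two Poisson points at $p,q\in\H^2$, the expected length of the Voronoi edge they share is the integral over their hyperbolic perpendicular bisector of the probability that no other Poisson point lies closer to that bisector, and letting $\lambda\to 0$ reduces this to a tractable hyperbolic integral whose value is the universal constant $2/\pi$. The delicate point is transferring the computation to an arbitrary closed surface: a surface in $\mathcal{M}_g$ may have arbitrarily short geodesics, and near them the local Voronoi structure can differ from the one in $\H^2$ (a single Poisson point may influence the cell of $x$ via several nearly-equal-length lifts). The proof must bound this effect uniformly in $\mathcal{S}\in\mathcal{M}_g$, which is likely what dictates the precise rate at which $\lambda(g)$ must vanish.
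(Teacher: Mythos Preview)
Your overall strategy matches the paper's exactly: Poisson--Voronoi tessellation, independent $2$-colouring of cells, and the identification $\E[|\partial A|]=\tfrac12\,\E[|\partial\mathrm{Vor}_\lambda(\mathcal{S})|]$ together with the target density $2/\pi$. Two places where the paper proceeds more simply or differently than you propose are worth noting. First, you do not need any second-moment control of $|\partial A|$: since the Cheeger constant is an infimum, an \emph{upper} bound on $|\partial A|$ suffices, and the paper gets this by Markov's inequality alone. Second, your heuristic for $\mathrm{Var}(|A|)$ (``typical cell has area $1/\lambda$, so $\sum_C|C|^2\sim|\mathcal{S}|/\lambda$'') hides a real difficulty. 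To bound $\P(C(x)=C(y))$ one needs a lower bound on $|B_r(x)|$, which is not available uniformly when the injectivity radius is tiny. The paper sidesteps this by assuming $h(\mathcal{S})\geq\delta$ (harmless, since otherwise the theorem is trivial) and using Cheeger's inequality in the form $\tfrac{\mathrm{d}}{\mathrm{d}r}|B_r(x)|\geq\delta|B_r(x)|$ to force exponential ball growth from a collar-lemma seed.

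The genuine gap in your plan is the perimeter estimate itself. Your Palm--Slivnyak sketch is essentially Isokawa's computation in $\H$, and you correctly flag that transferring it to a surface with short geodesics is the crux---but your speculation that this is handled by tuning the rate of $\lambda(g)$ is off. The paper's bound $\tfrac{1}{|\mathcal{S}|}\E[|\partial\mathrm{Vor}_\lambda(\mathcal{S})|]\leq\tfrac{2}{\pi}+o_\lambda(1)$ is uniform in $\mathcal{S}$ for every fixed small $\lambda$; no surface-dependent rate is needed. The mechanism is different from what you outline: one works around a \emph{typical point} $x\in\mathcal{S}$ (not a Poisson point), thickens $\partial\mathrm{Vor}_\lambda$ by $\varepsilon$, and pulls the computation back to the Dirichlet domain $D(\mathcal{S},x)\subset\H$. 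The error in this pull-back is controlled by showing that intersections of two thin concentric annuli have area $o(\varepsilon)$. The Dirichlet domain is a convex polygon but not rotationally invariant, and the decisive ingredient you are missing is an optimisation lemma: for any probability measure $\nu$ on $[0,2\pi]$,
\[
\int_0^{2\pi}\!\!\int_0^{2\pi}\Bigl|\sin\tfrac{\theta_1-\theta_2}{2}\Bigr|\,\nu(\mathrm{d}\theta_1)\,\nu(\mathrm{d}\theta_2)\;\leq\;\frac{2}{\pi},
\]
with equality for Lebesgue. This is exactly what absorbs the anisotropy of the angular slice $I_r(x)=\{\theta:[r;\theta]\in D(\mathcal{S},x)\}$ and produces the constant $2/\pi$ uniformly over all surfaces, regardless of systole.
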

We recall that the Cheeger constant of the hyperbolic plane $ \mathbb{H}$ is equal to $1$, which is asymptotically attained by large disks. As such, our result shows a gap between the maximal Cheeger constant of a large, closed hyperbolic surface and that of its universal cover. The presence of this gap was conjectured by Wright and Lipnowski \cite{Wright} building upon \cite{BrooksZuk} and inspired by similar results in graph theory. It also follows from our result that Cheeger's inequality \cite{Cheeger} -- the original reason for which the Cheeger constant was introduced -- cannot be used to show that a closed hyperbolic surface of large genus has an optimal spectral gap. Finally, Theorem \ref{thm:main} implies a recent result by Shen--Wu on random Bely\u{\i} surfaces \cite{ShenWu}.

\paragraph{Acknowledgment.} The authors are grateful to the organizers of the conference "Structures on surfaces" (CIRM), where this work was started.

\section{Context and sketch of proof}
Before sketching the proof of Theorem \ref{thm:main}, let us place it into context by telling the analogous story in the case of $d$-regular graphs. We will use similar notation as in the case of hyperbolic surfaces, so that the reader may follow the parallels more easily.

\subsection{Warm-up: the graph case} Fix an integer $d \geq 3$ and consider the set $ \mathcal{G}_d(n)$ of all connected simple graphs on $n$ vertices having all degree $d$ (we assume that $d\cdot n$ is even so that $\mathcal{G}_d(n)$ is not empty). If $ \mathfrak{g}_{n} \in \mathcal{G}_d(n)$ and $A$ is a subset of the vertices of $ \mathfrak{g}_{n}$, the isoperimetric constant of $A$ is defined as 
$$ h^*(A) = \frac{|\partial A|}{|A|},$$ where $|A|$ is the number of vertices of $A$ and $|\partial A|$ is the number of edges having one endpoint in $A$ and the other outside $A$. The Cheeger constant of $ \mathfrak{g}_{n}$ is then 
$$ h( \mathfrak{g}_{n}) := \inf \left\{ h^*(A) \, | \, A \subset  \mathfrak{g}_n \mbox{ with } |A| \leq n/2\right\}.$$ It is easy to construct graphs $ \mathfrak{g}_n \in \mathcal{G}_d(n)$ with small Cheeger constant, for example if they contain a large piece which looks roughly one-dimensional. On the other hand, we have $$  \mathbf{c}_d:= \liminf_{n \to \infty} \sup_{ \mathfrak{g}_{n} \in \mathcal{G}_d(n)} h( \mathfrak{g}_{n})  >0,$$ since there are  (families of) graphs, called \emph{expanders}, whose Cheeger constant is uniformly bounded from below.  The existence of such graphs has famously been proved by Margulis \cite{Margulis_explicit} through an explicit construction and Pinsker  \cite{Pinsker} using a probabilistic argument.  Ramanujan graphs \cite{lubotzky1988ramanujan,Margulis_ramanujan} are very good expanders and their existence shows that $ \mathbf{c}_d \geq   \frac{d}{2}- O( \sqrt{d})$ when $d \to \infty$, which can also be proved using random graphs \cite{Bollobas_isoperimetric}. Conversely, an easy argument shows that the Cheeger constant of a large $d$-regular graph is asymptotically bounded from above by that of the $d$-regular tree $ \mathbb{T}_d$, which is equal to $d-2$, so that in particular $ \mathbf{c}_d \leq d-2$. This bound is not sharp and in fact Bollobas \cite{Bollobas_isoperimetric}, later sharpened by Alon \cite{Alon}, proved that
\begin{eqnarray}
\label{eq:1/2} \mathbf{c}_d \leq  \frac{1}{2}(d-2) \quad  \mbox{for all } d \geq 3
\end{eqnarray}
(and even $ \mathbf{c}_d =  \frac{d}{2}- O (\sqrt{d})$ asymptotically as $d\to \infty$). This gap between the largest Cheeger constant of large $d$-regular graphs and that of $ \mathbb{T}_d$ is the graph analog of our Theorem \ref{thm:main}. \\
The idea of the proof of \eqref{eq:1/2} is surprisingly simple: fix a large graph $ \mathfrak{g}_{n} \in \mathcal{G}_d(n)$ and color \emph{uniformly at random} half of its vertices black and the other half white. Consider then the subset $A$ of size $n/2$ consisting of the black vertices. Since an edge is counted in $ |\partial A|$ if and only its endpoints have different colors, the probability that a given edge contributes to $| \partial A|$ is approximately $ \frac{1}{2}$. We thus have
\begin{equation}\label{eqn_graph_boundary_A}
\mathbb{E}[|\partial A|] \approx \frac{1}{2} | \mathrm{Edges}( \mathfrak{g}_{n})| = \frac{1}{2}\cdot\frac{dn}{2},
\end{equation}
and so $ h( \mathfrak{g}_{n}) \leq \mathbb{E}[h^*(A)] \leq \frac{d}{2}$ asymptotically when $n$ is large. To get the better bound $ h( \mathfrak{g}_{n}) \leq \frac{d-2}{2}$, the idea is to first regroup the vertices of $ \mathfrak{g}_{n}$ into connected ``regions'' $R_1, ... , R_k$ of vertices which are all large (i.e. $1 \ll |R_{i}|$), but not too large (i.e. $ |R_{i}| \ll n$). In particular $k$ must be large. In the graph case, one can perform such a splitting in various deterministic ways, e.g.~using a spanning tree of $ \mathfrak{g}_{n}$ as in \cite[Section 3]{Alon}. Since those regions are large and connected we have the crude bound valid on $d$-regular trees
 \begin{eqnarray} \label{eq:crudetree} h^{*}(R_{i}) \leq d-2 +o(1).  \end{eqnarray} Hence, the number of edges whose endpoints lie in two different regions $R_{i} \ne R_{j}$ is roughly $ \frac{n(d-2)}{2}$. We can then proceed as above and color each region uniformly at random in black or white. Since the regions are not too large, a standard concentration argument shows that  the black vertices form a subset $ \tilde{A}$ of approximately $n/2$ vertices. Again, a given edge is counted in $|\partial \tilde{A}|$ if its endpoints lie in two different regions with different colors. By the same computation as in~\eqref{eqn_graph_boundary_A} for $\E \left[ |\partial \tilde{A} |\right]$, we deduce $$ h^{*}( \tilde{A}) \preceq \frac{1}{2} \cdot \frac{n(d-2)}{2} \cdot \frac{2}{n} = \frac{d-2}{2}.$$

\subsection{Hyperbolic surfaces} Let us now draw the parallel with the case of hyperbolic surfaces. A first difficulty is that the \emph{a priori} crude bound  \eqref{eq:crudetree} does not hold in the continuous setting: there is no upper bound on the Cheeger constant of a connected set since there are such sets with a nasty fractal boundary having a large one-dimensional measure. However, the global inequality 
$$ h( \mathcal{S}_{g}) \leq h( \mathbb{H}) = 1+o(1) \quad \text{as }g\to\infty$$
is still true for any hyperbolic surface of genus $g\geq 2$. This can e.g.~be derived by spectral considerations, combining works of Cheeger \cite{Cheeger} and Cheng \cite{Cheng}. Here also, the existence of ``expander surfaces'' of large genus with Cheeger constant bounded away from $0$ is known. The known constructions essentially fall into three (overlapping) categories. First of all, there are multiple ways to compare the Cheeger constant of a hyperbolic surface to that of a graph \cite{Buser_cubicgraphs,Brooks_cheeger}. Secondly, just like in the case of graphs, it is known, due to Buser \cite{BuserNote}, that spectral expansion implies isoperimetric expansion. As such, the many examples of surfaces with a spectral gap -- the first examples are due to Selberg \cite{Selberg} and recently near optimal spectral expanders were found by Hide and Magee \cite{HideMagee} -- also give rise to surfaces with large Cheeger constants. Finally, multiple random constructions \cite{Mir,BM04} are known to provide examples.
 
Let us now try to mimic the proof of the graph case to prove a strict inequality in the last display.  Obviously, taking half of the points of $ \mathcal{S}_{g}$ does not make sense, and one wants to first split our deterministic surface $ \mathcal{S}_{g}$ into regions $ {R}_{1}, ... , {R}_{k}$ satisfying $1 \ll |{R}_{i}| \ll g$ before coloring them in black and white with equal probability. In \cite{BrooksZuk} Brooks and Zuk roughly speaking postulated the existence of such a decomposition based on balls where the isoperimetric constants $h^*(R_i)$ of the regions in question are close to $1$. Given those hypothetical decompositions, the coloring argument would yield 
 $$ h( \mathcal{S}_{g}) \preceq  \frac{1}{2}$$ when $g \to \infty$. We were not able to prove that such decompositions actually exist on every hyperbolic surface and rather use a \emph{random splitting of $ \mathcal{S}_{g}$ based on a Voronoi decomposition}. More precisely, we consider a Poisson point process with points $X_{1}, ... , X_{N}$ of intensity given by $\lambda  \cdot \mu_{ \mathcal{S}_g}$, where $ \mu_{ \mathcal{S}_g}$ is the hyperbolic area measure on $ \mathcal{S}_{g}$ with total mass $4\pi(g-1)$, and $\lambda >0$ is a small constant. Those points decompose $ \mathcal{S}_{g}$ into the Voronoi regions $ \mathrm{Vor}_{\lambda}( \mathcal{S}_{g}) := \{ C_{1}, ... , C_{N}\}$ where 
 $$ C_{i} = \left\{x \in \mathcal{S}_{g} : {d}_{\mathcal{S}_{g}}(x, X_{i}) = \min_{1 \leq j \leq N} {d}_{ \mathcal{S}_{g}}(x, X_{j}) \right\}.$$
When the intensity parameter $\lambda$ is small, the typical area of a region is of order $ 1/ \lambda$ and so one can expect (see Lemma \ref{lem_LLN_Voronoi_general} below) that indeed we have $ 1  \ll  | C_{i}| \ll | \mathcal{S}_{g}|$ at least for most regions. Now recall that we have no \emph{a priori} upper bound for the Cheeger constant of these regions (contrary to the graph case). The crux of the argument boils down to showing that, at least on average, we have $ h^{*}( C_{i}) = \frac{4}{\pi}$. More precisely, we will show (see Proposition \ref{prop:perimeter}) that the expected length of the union $\partial \mathrm{Vor}_{\lambda}( \mathcal{S}_{g})$ of the boundaries of the Voronoi cells $C_{1}, \dots , C_{N}$ satisfies
 \begin{eqnarray} \label{eq:boundarypointless}  \limsup_{\lambda \to 0} \sup_{g \geq 2}\sup_{ \mathcal{S}_{g} \in \mathcal{M}_{g}}  \frac{1}{| \mathcal{S}_g|}\mathbb{E}\left[\left|\partial \mathrm{Vor}_{\lambda}( \mathcal{S}_{g})\right|\right]   \leq   \frac{2}{\pi}.  \end{eqnarray}
 Given the above display, one can then run the same proof as in the graph case: we color uniformly the Voronoi cells in black and white and consider the resulting black component. Its volume is concentrated around $\frac{1}{2}| \mathcal{S}_{g}|$ and its boundary size is less than $| \mathcal{S}_g| \cdot \frac{1}{2} \cdot (\frac{2}{\pi} + \varepsilon)$ for $g$ large when $\lambda$ is small, so that $ h( \mathcal{S}_{g}) \leq (\frac{2}{\pi} + \varepsilon)$ asymptotically as claimed in Theorem \ref{thm:main}. 
 
 \subsection{The pointless Voronoi tessellation of $ \mathbb{H}$}
Let us gain some intuition on the proof of \eqref{eq:boundarypointless} done in Proposition \ref{prop:perimeter}. Suppose first that the systole of $ \mathcal{S}_{g}$ tends to $\infty$ as $ g \to \infty$. In that situation, the neighborhood of each point in $  \mathcal{S}_g$ looks like a piece of the hyperbolic plane and  the Voronoi tessellation $ \mathrm{Vor}_{\lambda}( \mathcal{S}_{g})$ converges in distribution (in the local Hausdorff sense) towards the Voronoi tessellation $ \mathrm{Vor}_{\lambda}(  \mathbb{H})$ of the hyperbolic plane with intensity $\lambda$, see Figure \ref{fig:voronoiH}.

 This classical object has been studied in stochastic geometry \cite{isokawa2000some,calka2021poisson} and in particular in relation to its percolation properties \cite{benjamini2001percolation,hansen2021poisson,hansen2022critical}. In particular, Isokawa \cite{isokawa2000some} computed the mean characteristics of a typical\footnote{By Palm calculus, such a cell can be obtained by adding the point $0$ to the Poisson process and considering the associated region.} cell $C_{\lambda}$ in $ \mathrm{Vor}_{\lambda}( \mathbb{H})$ : this cell is an almost surely finite convex hyperbolic polygon satisfying 
 $$ \mathbb{E}[ C_{\lambda}] = \frac{1}{\lambda} \quad \mbox{ and } \quad \mathbb{E}[ |\partial C_{\lambda}|] =  \frac{8}{\sqrt{\pi \lambda}} \int_{0}^{\infty} \mathrm{e}^{-u} \sqrt{u + \frac{u^{2}}{4\pi \lambda}} \, \mathrm{d}u.$$
In particular, the ``average Cheeger constant'' of cells of $ \mathrm{Vor}_{\lambda}( \mathbb{H})$ satisfies in the small intensity limit
$$ \frac{  \mathbb{E}[ |\partial C_{\lambda}|]}{ \mathbb{E}[ | C_{\lambda}|]}  \xrightarrow[\lambda \to 0]{}   \frac{4}{\pi},$$
which explains \eqref{eq:boundarypointless} in the case when $\mathrm{Systole}(\mathcal{S}_g)\to\infty$. Note that recently percolation on hyperbolic Poisson--Voronoi tessellation with small intensity has been studied in \cite{hansen2021poisson}. Underneath the convergence of the above display as the intensity tends to $0$ lies the fact that $ \mathrm{Vor}_{\lambda}( \mathbb{H})$ converges (in distribution for the Hausdorff topology on compact sets of $ \mathbb{H}$) towards a limiting object that we name the \emph{pointless Poisson--Voronoi tessellation} of the hyperbolic disk (see Figure \ref{fig:voronoiH}) and whose construction and properties will be studied in a forthcoming work.

Finally, in order to deal with the case in which the systole is not large, we will need to study $\partial \mathrm{Vor}_{\lambda}( \mathcal{S}_g)$ in the neighbourhood of a point $x$ of $\mathcal{S}_g$. To do so,  we will replace the hyperbolic plane by the \emph{Dirichlet domain} -- a specific fundamental domain --  of $\mathcal{S}_g$ around $x$. We will prove that in order to prove \eqref{eq:boundarypointless}, it will be enough to understand $\partial \mathrm{Vor}_{\lambda}( \mathcal{S})$ inside the Dirichlet domain, thus also reducing the general case to computations in the hyperbolic plane.

\section {Proof}
Let us recall some basic notions and fix notation before starting the proof.

\subsection{Preliminaries}
\paragraph{The hyperbolic plane.}
We recall that the \emph{hyperbolic plane} $\H$ is the unit disk $\{ z \in \mathbb{C} :  \big| |z|<1 \}$, equipped with the Riemannian metric $\frac{4 |\mathrm{d}z|^2}{(1-|z|^2)^2}$. This metric has constant curvature $-1$ and is invariant under M\"obius transformations. We will denote by $d_{\H}(x,y)$ the hyperbolic distance in $\H$. For several computations in the paper, it will be useful to use polar coordinates on $\H$. More precisely, for $r>0$ and $\theta \in [0,2\pi)$, we denote by $[r;\theta]$ the point of $\H$ of the form $z=\rho(r) \mathrm{e}^{i\theta}$, where $\rho(r)>0$ is such that $d_{\H}(0,z)=r$. We note that the area measure on $\H$ can be written as $\sinh(r) \, \mathrm{d}r \, \mathrm{d}\theta$.

\paragraph{Hyperbolic surfaces and Dirichlet domains.}
We recall that a closed hyperbolic surface $ \mathcal{S}$ of genus $g\geq 2$ is a Riemannian surface which is locally isometric to $\H$, or equivalently which is the quotient of $\H$ by a discrete, torsion-free, cocompact group $G$ of isometries. We denote by $d_{ \mathcal{S}}$ and $\mu_{ \mathcal{S}}$ (or just $d$ and $\mu$ if no confusion is possible) the hyperbolic metric and area measure on $ \mathcal{S}$. If $A \subset  \mathcal{S}$ is a Borel subset we write $|A|$ for $\mu(A)$ and $|\partial A|$ for the length of its boundary. If $x$ is a point of a hyperbolic surface and $r \geq 0$, we denote by $B_r(x)$ the closed ball of radius $r$ around $x$.

We will make important use of the \emph{Dirichlet domain}, which is a particular choice of a fundamental domain for the action of $G$ on $\H$, see \cite[Section 9.4]{Beardon}. More precisely, let $x$ be a point of a hyperbolic surface $ \mathcal{S}=\H/G$, and let $p:\H \to  \mathcal{S}$ be its universal cover, so that $p(0)=x$. We denote by $D( \mathcal{S},x)$ the set of those points $y' \in \H$ such that $d_{\H}(0,y')=d_{\mathcal{S}}(x,p(y'))$. In other words, this means that the image under $p$ of the geodesic from $0$ to $y'$ is still a shortest path in $\mathcal{S}$. Dirichlet domains are convex polygons of $ \mathbb{H}$ \cite[Theorem 9.4.2]{Beardon}. For (almost) every point $y \in S$, we will denote by $p^{-1}(y)$ the unique point of $D(\mathcal{S},x)$ which is sent to $y$. Note that the maps $p:D(\mathcal{S},x) \to \mathcal{S}$ and $p^{-1}:\mathcal{S} \to D(\mathcal{S},x)$ are measure-preserving.

\paragraph{Poisson point processes.} The surface $ \mathcal{S}$ or the hyperbolic plane $ \mathbb{H}$ both carry a Borel measure $ \mu_{  \mathcal{S}}$  which is of finite mass $4\pi(g-1)$ in the case of $ \mathcal{S}$ and $\mu_{ \mathbb{H}}$ which is $\sigma$-finite in the case of $ \mathbb{H}$. They have no atoms, so one can define a Poisson point process on those spaces with intensity $\lambda \cdot \mu_{ \cdot}$. This is a cloud of distinct random points that is characterized by the fact that for any disjoint measurable subsets $A_{1}, ..., A_{k}$, the number of points falling inside $A_{i}$ are independent Poisson random variables of mean $|A_i|$, see \cite{kingman1992poisson} for details.

\paragraph{Poisson--Voronoi tessellation.} Let $ \mathcal{S}$ be a hyperbolic surface of (large) genus $g$. As announced above, to bound its Cheeger constant from above, we shall build a random subset $A_{\lambda}$ of $\mathcal{S}$ in the following way. We first throw a Poisson point process $\Pi = \{X_1, ... , X_N\}$ on $\mathcal{S}$ with intensity $\lambda \cdot \mu_{ \mathcal{S}}$, where $\lambda >0$ is a small constant. In particular the random variable $N$ follows a Poisson law with mean $\lambda \cdot 4 \pi  (g-1)$. We then consider the closed Voronoi cells $ \mathrm{Vor}_\lambda(  \mathcal{S})= \{C_1, ... , C_N\}$ it defines  and denote by $ C(x)$ the Voronoi cell containing the point $x \in \mathcal{S}$ (with ties broken arbitrarily).
Conditionally on $\Pi$, each cell is colored black or white independently with probability $\frac{1}{2}$ and we let $A_\lambda \subset \mathcal{S}$ be (the closure of) the union of the black cells.
On the one hand, we will prove (Lemma \ref{lem_LLN_Voronoi_general}) that unless $h( \mathcal{S})$ is very small (in which case our main result is trivial), the area $|A_\lambda|$ is close to $\frac{|\mathcal{S}|}{2}$ if the surface is large enough. On the other hand, we will show (Proposition \ref{prop:perimeter}) that $\E \left[ \left| \partial A_{\lambda} \right|\right]$ is close to $\frac{1}{\pi} | \mathcal{S}|$ if the intensity $\lambda$ is chosen small enough. 

\subsection{Area estimate}
We start with the area estimate. We believe that Lemma~\ref{lem_LLN_Voronoi_general} below should be true even without the Cheeger constant assumption, but we could not find a short argument to prove the general statement.

\begin{lemma}[Area estimate]\label{lem_LLN_Voronoi_general}
For any $\lambda>0$ and $\delta>0$, there is $g_0\geq 2$ with the following property. For every hyperbolic surface $ \mathcal{S}$ of genus $g \geq g_0$ such that $h( \mathcal{S}) \geq \delta$, if the random subset $A_{\lambda}$ of $\mathcal{S}$ is built as described above, we have
	\[ \P \left( \left| \frac{|A_{\lambda}|}{| \mathcal{S}|}-\frac{1}{2} \right| > \delta \right)<\delta. \]
\end{lemma}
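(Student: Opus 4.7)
The plan is to apply Chebyshev's inequality after conditioning on the Poisson point process $\Pi=\{X_1,\ldots,X_N\}$. Given $\Pi$, we have $|A_\lambda|=\sum_i Y_i|C_i|$ where the $Y_i$'s are i.i.d.\ Bernoulli$(1/2)$ colors, so the conditional mean is $\frac{1}{2}|\mathcal{S}|$ and the conditional variance is $\frac{1}{4}\sum_i |C_i|^2$. Chebyshev gives
\[
\mathbb{P}\!\left(\left|\,|A_\lambda| - \tfrac{|\mathcal{S}|}{2}\,\right| > \delta|\mathcal{S}|\right) \leq \frac{\mathbb{E}\!\left[\sum_i |C_i|^2\right]}{4\delta^2 |\mathcal{S}|^2},
\]
so it suffices to show $\mathbb{E}[\sum_i |C_i|^2]=o(|\mathcal{S}|^2)$ as $g\to\infty$ with $\lambda,\delta$ fixed. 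Using the identity $\sum_i |C_i|^2=\int_{\mathcal{S}} |C(y)|\,d\mu(y)$, this in turn reduces to establishing a uniform bound $\mathbb{E}[|C(y)|]\leq K(\lambda,\delta)$ valid for every $y\in\mathcal{S}$ and every surface $\mathcal{S}$ with $h(\mathcal{S})\geq\delta$; such a bound yields $\mathbb{E}[\sum_i |C_i|^2]\leq K|\mathcal{S}|$, and the Chebyshev estimate becomes $K/(4\delta^2|\mathcal{S}|)\to 0$ as $|\mathcal{S}|=4\pi(g-1)\to\infty$.

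\medskip

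To bound $\mathbb{E}[|C(y)|]$, I would prove a tail estimate $\mathbb{P}(|C(y)|>t)\leq f(t)$ with $\int f<\infty$, uniformly in $y$ and in $\mathcal{S}$. The event $\{|C(y)|>t\}$ forces the cell containing $y$ to be large, and hence the existence of a region of area $\approx t$ containing no Poisson points except the cell's center. The Cheeger hypothesis controls the likelihood of such Poisson-free pockets through ball growth: combining the coarea formula $\frac{d}{dr}|B_r(z)|=|\partial B_r(z)|$ with $h(\mathcal{S})\geq\delta$ yields $\frac{d}{dr}|B_r(z)|\geq \delta|B_r(z)|$ whenever $|B_r(z)|\leq|\mathcal{S}|/2$, forcing balls to grow exponentially in $r$. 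Coupled with a Mecke--Slivnyak-type computation expressing $\mathbb{P}(x,y\text{ share their nearest Poisson point})$ as an integral over a ``center'' $z$ of $\lambda\, e^{-\lambda\,\mu(B_{d(z,x)}(x)\cup B_{d(z,y)}(y))}$, this translates ball-area lower bounds into an upper bound on $\mathbb{E}[|C(y)|]$.

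\medskip

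The principal obstacle is carrying out this last step robustly on surfaces whose systole is very short: an \emph{a priori} worry is that a Voronoi cell could spread along a thin neck of the surface (where the injectivity radius is tiny) and accumulate large area. The Cheeger hypothesis $h(\mathcal{S})\geq\delta$ rules this out by enforcing exponential ball growth even inside thin parts, but making this quantitative uniformly in the surface is the delicate point --- and this is precisely why the Cheeger assumption appears in the statement, as the authors indicate. Once the uniform bound on $\mathbb{E}[|C(y)|]$ is established, the lemma follows from the Chebyshev estimate above by choosing $g_0$ large enough that $K(\lambda,\delta)/(4\delta^2\cdot 4\pi(g_0-1))<\delta$.
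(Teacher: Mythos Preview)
Your high-level strategy matches the paper's: compute $\E[|A_\lambda|]=|\mathcal{S}|/2$, bound the variance by $\frac14\E[\sum_i|C_i|^2]=\frac14\iint_{\mathcal{S}^2}\P(C(x)=C(y))\,d\mu(x)\,d\mu(y)$, and conclude by Chebyshev. The paper also uses the differential inequality $\frac{d}{dr}|B_r(z)|\geq\delta|B_r(z)|$ coming from the Cheeger hypothesis, exactly as you do.

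The gap is the step you yourself flag as ``the delicate point'': the uniform bound $\E[|C(y)|]\leq K(\lambda,\delta)$ for \emph{every} $y\in\mathcal{S}$. Exponential ball growth from Cheeger gives $|B_r(y)|\geq |B_{r_0}(y)|\,e^{\delta(r-r_0)}$, but the initial value $|B_{r_0}(y)|$ depends on the injectivity radius at $y$, which can be arbitrarily small even on surfaces with $h(\mathcal{S})\geq\delta$ (a short \emph{non-separating} geodesic does not by itself force a small Cheeger constant). So for $y$ deep in a thin collar your growth estimate only kicks in after a radius of order $\log(1/\mathrm{InjRad}(y))$, and the argument as written does not yield a bound on $\E[|C(y)|]$ independent of that radius. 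You acknowledge the issue but do not resolve it.

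The paper does \emph{not} attempt this uniform-in-$y$ bound. Instead it bounds the double integral directly by splitting $\mathcal{S}^2$: the collar lemma gives $|\{y:\mathrm{InjRad}(y)\leq r_0\}|\leq Cr_0^2|\mathcal{S}|$, so points with small injectivity radius form a set of small measure; for the remaining ``good'' pairs $(x,y)$ (both with $\mathrm{InjRad}>r_0$ and $d(x,y)>2r_1$), one has $|B_{r_1}(x)|,|B_{r_1}(y)|\geq 2\pi(\cosh r_0-1)e^{\delta(r_1-r_0)}$ and the event $C(x)=C(y)$ forces one of these two disjoint balls to be Poisson-free. This yields $\mathrm{Var}(|A_\lambda|)\leq \delta^3|\mathcal{S}|^2$ for $g$ large. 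The missing ingredient in your sketch is precisely this use of the collar lemma to control the \emph{measure} of the thin part, which replaces the need for a pointwise uniform bound.
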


\begin{proof}
	Let $ \mathcal{S}$ be a hyperbolic surface with Cheeger constant at least $ \delta$. It follows immediately from our setup that $\E \left[ |A_\lambda| \right] = \frac{|\mathcal{S}|}{2}$, so we only need to bound the variance of $|A_\lambda|$. By conditioning on $ \mathrm{Vor}_\lambda( \mathcal{S})$, we have
	\begin{align*}
	\var \left( |A_\lambda| \right) &=   \mathbb{E}\left[\frac{1}{4} \sum_{i=1}^N | C_i|^2\right] = \frac{1}{4} \int_{\mathcal{S}^2} \P \left( C(x)=C(y) \right) \, \mu(\mathrm{d}x) \, \mu(\mathrm{d}y).
	\end{align*}
	To bound $\P \left( C(x)=C(y) \right)$ from above, we will first argue that the assumption $h( \mathcal{S}) \geq \delta$ implies a lower bound on the volume of balls around $x$ and $y$ for "most" points $x,y \in S$.
	
	More precisely, for all $x \in S$ and $r>0$, by the Cheeger constant assumption we have
	\[ \frac{\mathrm{d}}{\mathrm{d}r} \left| B_r(x) \right| = \left| \partial B_r(x) \right| \geq \delta \left| B_r(x) \right|.\]
	Therefore, let $r_1>r_0>0$ (the values of $r_0$ and $r_1$, depending only on $\delta$ and $\lambda$, will be specified later), and assume that $g$ is large enough to have $2\pi(\cosh(r_1)-1)<\frac{| \mathcal{S}|}{2}$. We have
	\[ \left| B_{r_1}(x) \right| \geq  \mathrm{e}^{\delta(r_1-r_0)} \left| B_{r_0}(x) \right|.\]	
	On the other hand, by the collar lemma, we know that there is an absolute constant $C>0$ such that, if $r_0$ is small enough, we have
	\[
	\left| \left\{ x\in  \mathcal{S} \, | \, \mathrm{InjRad}(x) \leq r_0 \right\} \right| \leq C r_0^2 \cdot | \mathcal{S}|.
	\]
	But if the injectivity radius around $x$ is larger than $r_0$, then $\left| B_{r_0}(x) \right| = 2\pi \left( \cosh(r_0)-1 \right)$ and we get a lower bound on $\left| B_{r_1}(x) \right|$. In particular, by taking $r_0$ small enough, we find
	\begin{equation}\label{eqn_few_points_with_small_balls}
	\left| \left\{ x\in  \mathcal{S} \, | \, \left| B_{r_1}(x) \right|< \mathrm{e}^{\delta(r_1-r_0) 2\pi} \left( \cosh(r_0)-1 \right) \right\} \right| \leq \delta^3 | \mathcal{S}|.
	\end{equation}
	We denote by $K_{r_1} \subset  \mathcal{S}^2$ the set of pairs $(x,y)$ such that $d(x,y)>2r_1$ and neither $x$ nor $y$ satisfies the event in the left-hand side of~\eqref{eqn_few_points_with_small_balls}. Let $(x,y) \in K_{r_1}$. If $x$ and $y$ belong to the same Voronoi cell, since $B_{r_1}(x)$ and $B_{r_1}(y)$ are disjoint, at least one of these two balls contains none of the points $X_i$. It follows that
	\begin{align*}
	\P \left( C(x)=C(y) \right) & \leq \exp \left( -\lambda |B_{r_1}(x)| \right) + \exp \left( -\lambda |B_{r_1}(y)| \right)\\
	&\leq 2\exp \left( -2\pi \lambda  \mathrm{e}^{\delta(r_1-r_0)} \left( \cosh(r_0)-1 \right)   \right).
	\end{align*}
	In particular, if we have chosen a large enough value for $r_1$, this probability is smaller than $\delta^3$. Therefore, we get
	\[ \var \left( |A_\lambda| \right) \leq \frac{\delta^3| \mathcal{S}|^2}{4} + \frac{\left|  \mathcal{S}^2 \backslash K_{r_1} \right|}{4} \leq \frac{\delta^3| \mathcal{S}|^2}{4}+2\frac{\delta^3 | \mathcal{S}|^2}{4} + \frac{2\pi}{4} \left( \cosh(2r_1)-1 \right) | \mathcal{S}|,\]
	where the second term comes from~\eqref{eqn_few_points_with_small_balls}, and the third term counts the pairs $(x,y)$ with $d(x,y)<2r_1$. In particular, for $| \mathcal{S}|= 4\pi(g-1)$ large enough, the variance of $|A_\lambda|$ is smaller than $\delta^3| \mathcal{S}|^2$ and the conclusion follows by the Chebychev inequality.
\end{proof}

\subsection{Perimeter estimate}
Let us now pass to the perimeter estimate which is the most technical part of the proof.
\begin{proposition}[Perimeter estimate] \label{prop:perimeter} Recall that $ \partial \mathrm{Vor}_\lambda( \mathcal{S})$ is the union of the sides of the Poisson-Voronoi cells $C_1, ... , C_N$ with intensity $\lambda \cdot \mu_ { \mathcal{S}}$. Then we have 
$$ \limsup_{\lambda \to 0} \sup_{g \to \infty} \sup_{ \mathcal{S} \in \mathcal{M}_g} \frac{1}{| \mathcal{S}|}\mathbb{E}\left[\left|\partial  \mathrm{Vor}_\lambda ( \mathcal{S})\right| \right] \leq \frac{2}{\pi}.$$
\end{proposition}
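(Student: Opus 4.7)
The plan is to combine a Slivnyak--Mecke-type Palm identity with a lift to the Dirichlet domain, reducing the computation of $\mathbb{E}[|\partial \mathrm{Vor}_\lambda(\mathcal{S})|]$ to a planar calculation governed by Isokawa's formula for a typical Poisson--Voronoi cell in $\mathbb{H}$.

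First, since each edge of $\mathrm{Vor}_\lambda(\mathcal{S})$ is shared between exactly two cells, the Slivnyak--Mecke formula applied to $\Pi$ yields
\[
\mathbb{E}\bigl[|\partial \mathrm{Vor}_\lambda(\mathcal{S})|\bigr] \;=\; \frac{\lambda}{2} \int_{\mathcal{S}} \mathbb{E}\bigl[|\partial C(x)|_\mathcal{S}\bigr]\, d\mu(x),
\]
where $C(x)$ is the Voronoi cell of $x$ in the tessellation generated by $\Pi \cup \{x\}$. For each $x \in \mathcal{S}$, I lift to $0 \in \mathbb{H}$ via the Dirichlet domain $D = D(\mathcal{S}, x)$, and write $\tilde{\Pi}$ for the $G$-periodic lift of $\Pi$. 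Because $p|_D$ is measure-preserving, the restriction $\tilde{\Pi} \cap D$ is a Poisson process of intensity $\lambda$ on $D$. Unfolding the distances (a point $y \in \mathcal{S}$ is closer to $x$ than to any $X_i$ iff its lift $y' \in D$ is closer to $0$ than to any point of $\tilde{\Pi}$) gives $p^{-1}(C(x)) = V_0 \cap D$, where $V_0$ is the planar Voronoi cell of $0$ in the tessellation by $\{0\} \cup \tilde{\Pi}$, and hence
\[
|\partial C(x)|_{\mathcal{S}} \;=\; |\partial V_0 \cap D|_{\mathbb{H}}.
\]

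The crucial step is to bound $\mathbb{E}[|\partial V_0 \cap D|_\mathbb{H}]$ in terms of the expected boundary $\mathbb{E}[|\partial \hat{V}_0|]$ of a planar Poisson--Voronoi cell, where $\hat{V}_0$ is the cell of $0$ in the tessellation generated by $\{0\} \cup \hat{\Pi}$ for a standard Poisson process $\hat{\Pi}$ of intensity $\lambda$ on $\mathbb{H}$. I would couple $\hat{\Pi}$ to $\tilde{\Pi}$ so that $\hat{\Pi} \cap D = \tilde{\Pi} \cap D$ with $\hat{\Pi} \setminus D$ independent, and then split according to the injectivity radius at $x$: when $\mathrm{InjRad}(x) \geq K\log(1/\lambda)$ for a sufficiently large constant $K$, standard tail bounds for planar Poisson--Voronoi cells in $\mathbb{H}$ place $\hat{V}_0$ inside $D$ with overwhelming probability, and on this event $V_0 \cap D = \hat{V}_0$ by the coupling; on the ``short-injectivity'' complementary event, a crude bound on $|\partial V_0 \cap D|$ (via the diameter of a typical planar cell and an elementary Poisson concentration) combined with a Collar-type control on the measure of the short-injectivity locus must be shown to yield a negligible contribution.

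Finally, by Isokawa's formula $\lambda\, \mathbb{E}[|\partial \hat{V}_0|] \longrightarrow 4/\pi$ as $\lambda \to 0$, so substituting back we obtain
\[
\mathbb{E}\bigl[|\partial \mathrm{Vor}_\lambda(\mathcal{S})|\bigr] \;\leq\; \tfrac{\lambda}{2}|\mathcal{S}|\, \mathbb{E}[|\partial \hat{V}_0|]\,(1+o(1)) \;\longrightarrow\; \tfrac{2}{\pi}|\mathcal{S}|
\]
as $\lambda \to 0$, uniformly in $g$ and $\mathcal{S} \in \mathcal{M}_g$. The main obstacle is the uniform control in the third paragraph for surfaces with a very short systole, since $V_0$ may then be heavily distorted by the periodic copies of $0$ in $\tilde{\Pi}$ near $\partial D$, and a careful boundary-versus-area analysis -- not just a measure bound on the short-injectivity locus -- seems necessary to ensure that the ``bad'' contribution remains $o(|\mathcal{S}|)$ uniformly as $\lambda \to 0$.
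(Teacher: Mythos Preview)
Your Palm/Isokawa route is natural, but the obstacle you flag in the last paragraph is not a technicality; it is a genuine gap that breaks the argument as written. The splitting hinges on the locus $\{x \in \mathcal{S} : \mathrm{InjRad}(x) \geq K\log(1/\lambda)\}$ carrying almost all of the area. The collar lemma controls only the $\varepsilon_0$-thin part for a \emph{fixed} Margulis constant; it says nothing about $\{x : \mathrm{InjRad}(x) < R\}$ when $R = K\log(1/\lambda) \to \infty$. Concretely, for surfaces assembled from pairs of pants with all cuff lengths equal to $1$, the injectivity radius is uniformly bounded above at every point, so once $\lambda$ is small your ``good'' set is empty and the entire integral falls into the regime you have not controlled. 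Hence the comparison $V_0 \cap D \approx \hat{V}_0$ is simply unavailable on such surfaces, and no crude bound on the bad locus can rescue the uniformity in $\mathcal{S}$. (A secondary point: even on the good set, the event $\hat{V}_0 \subset D$ alone does not give $V_0 \cap D = \hat{V}_0$; you need the stronger event that $\hat{V}_0$ is determined by $\hat{\Pi} \cap D$, i.e.\ roughly twice the injectivity-radius margin.)

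The paper avoids this by \emph{not} centering on a Palm point. It writes $\mathbb{E}[|\partial C|] \leq \liminf_{\varepsilon \to 0} \frac{1}{2\varepsilon}\int_\mathcal{S} \P(x \in \partial^\varepsilon C)\,\mu(\mathrm{d}x)$ and computes $\P(x \in \partial^\varepsilon C)$ by conditioning on the nearest Poisson point $y$ to a \emph{typical} point $x$. The relevant quantity is the area $|A^\varepsilon_\mathcal{S}(x,y)|$ of candidate second-nearest points; Lemma~\ref{lem_A_surface_and_domain} shows that this differs from its Dirichlet-domain analogue $|A^\varepsilon_{D(\mathcal{S},x)}(0,p^{-1}(y))|$ by only $o(\varepsilon)$, because the discrepancy is contained in an intersection of thin annuli (Lemma~\ref{lem_thinrings}), and this holds \emph{uniformly} with no hypothesis on the systole. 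After this reduction the computation is planar, carried out in polar coordinates via the angular slices $I_r(x) = \{\theta : [r;\theta] \in D(\mathcal{S},x)\}$; the anisotropy of the Dirichlet domain is then absorbed by T\'oth's inequality (Lemma~\ref{lem_exo_colle}), $\iint_{I_r(x)^2}\left|\sin\tfrac{\theta-\theta'}{2}\right|\,\mathrm{d}\theta\,\mathrm{d}\theta' \leq \tfrac{2}{\pi}|I_r(x)|^2$, after which everything collapses to a one-variable integral in $|B_r(x)|$. The Remark following the proof explicitly notes that the Isokawa-style approach you attempt would require controlling the interaction of $I_r(x)$ with $I_s(x)$ for $r \neq s$, which appears hard to do uniformly.
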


\paragraph{An optimization lemma.}
Before proving this estimate, we state two intermediate results that will be useful for us. The first is a nice optimization lemma, for which a very short and direct proof is provided in the last page of~\cite{Toth56}.

\begin{lemma}\label{lem_exo_colle}
	Let $\nu$ be a probability measure on $[0,2\pi]$. Then we have
	\[ \int_0^{2\pi} \int_0^{2\pi} \left| \sin \frac{\theta_1-\theta_2}{2} \right| \nu(\mathrm{d}\theta_1) \nu(\mathrm{d}\theta_2) \leq \frac{2}{\pi},\]
	with equality if $\nu$ is the uniform measure.
\end{lemma}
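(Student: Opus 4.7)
The natural approach is Fourier analysis on the circle, exploiting the fact that the function $\alpha \mapsto |\sin(\alpha/2)|$ has an explicit Fourier expansion whose coefficients all have a favorable sign. Concretely, this function is continuous, even, and $2\pi$-periodic; a direct computation (via the product-to-sum identity $\sin(\alpha/2)\cos(n\alpha) = \frac{1}{2}[\sin((n+\tfrac{1}{2})\alpha) - \sin((n-\tfrac{1}{2})\alpha)]$ and integration over $[0,\pi]$) yields the uniformly convergent cosine expansion
$$\left|\sin\frac{\alpha}{2}\right| = \frac{2}{\pi} - \frac{4}{\pi}\sum_{n=1}^{\infty}\frac{\cos(n\alpha)}{4n^2-1}.$$
The key observation for the lemma is that every Fourier coefficient beyond the constant term is strictly negative.

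The next step is to substitute $\alpha = \theta_1 - \theta_2$ and integrate against $\nu \otimes \nu$. Uniform convergence of the series (the coefficients decay as $1/n^2$) justifies exchanging sum and integral by dominated convergence. Using the addition formula $\cos(n(\theta_1-\theta_2)) = \cos(n\theta_1)\cos(n\theta_2) + \sin(n\theta_1)\sin(n\theta_2)$, the $n$-th double integral factorizes as
$$\int_0^{2\pi}\!\!\int_0^{2\pi}\cos(n(\theta_1-\theta_2))\,\nu(\mathrm{d}\theta_1)\nu(\mathrm{d}\theta_2) = a_n(\nu)^2 + b_n(\nu)^2,$$
where $a_n(\nu) := \int_0^{2\pi}\cos(n\theta)\,\nu(\mathrm{d}\theta)$ and $b_n(\nu) := \int_0^{2\pi}\sin(n\theta)\,\nu(\mathrm{d}\theta)$. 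Combining everything,
$$\int_0^{2\pi}\!\!\int_0^{2\pi}\left|\sin\frac{\theta_1-\theta_2}{2}\right|\nu(\mathrm{d}\theta_1)\nu(\mathrm{d}\theta_2) = \frac{2}{\pi} - \frac{4}{\pi}\sum_{n=1}^{\infty}\frac{a_n(\nu)^2+b_n(\nu)^2}{4n^2-1} \leq \frac{2}{\pi},$$
since every term of the subtracted sum is non-negative.

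Equality forces $a_n(\nu) = b_n(\nu) = 0$ for every $n \geq 1$, which by Fourier uniqueness is equivalent to $\nu$ being the normalized uniform measure on $[0, 2\pi]$. I expect no genuine obstacle in this plan: the Fourier coefficient computation is elementary, and the only step deserving any care is the exchange of sum and double integral, which is immediate from the uniform convergence. A more geometric route is also available, based on interpreting $2|\sin((\theta_1 - \theta_2)/2)|$ as the Euclidean chord length $|e^{i\theta_1} - e^{i\theta_2}|$ and viewing $\nu$ as a probability distribution on the unit circle, but the Fourier approach seems the most economical.
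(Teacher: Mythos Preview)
Your proof is correct and follows essentially the same Fourier-analytic route that the paper indicates (citing T\'oth): expand the kernel $|\sin(\alpha/2)|$ in a cosine series, observe that all non-constant coefficients are negative, and factorize the double integral through the Fourier coefficients of $\nu$. The only cosmetic difference is that the paper first passes to smooth densities by an approximation argument, whereas you work directly with the Fourier coefficients $a_n(\nu),b_n(\nu)$ of the measure; your version is slightly cleaner in this respect and needs no density reduction.
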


The proof of Lemma~\ref{lem_exo_colle} consists of assuming by density that $\nu$ has a smooth density $f$ with respect to Lebesgue, and expressing the integral in terms of the Fourier coefficients of $f$. We will use this lemma to handle the fact that a Dirichlet domain $D(\mathcal{S},x)$ is not as isotropic as $\H$.

\paragraph{Intersection of thin rings.}
The second lemma will help ruling out some pathological behaviours of Dirichlet domains. For $0<a<b$ and $x \in \H$, we denote by $R_a^b(x)$ the set of points $z \in \H$ such that
\[ a \leq d(x,z) \leq b. \]

\begin{lemma}\label{lem_thinrings}
	Let $K>0$ and let $x,y$ be two distinct points of $\H$. Then for all $a>0$, we have
	\[ \left| R_a^{a+\eps}(x) \cap R_a^{a+\eps}(y) \right| = o(\eps) \]
	as $\eps \to 0$. Moreover, the $o(\eps)$ is uniform in $(x,y,a)$ provided $a \leq K$ and $d(x,y) \geq K^{-1}$.	
\end{lemma}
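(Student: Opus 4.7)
My plan is to work in hyperbolic polar coordinates $(r,\theta)$ around $x$, placing $y$ at $[D;0]$ with $D := d(x,y)$. The hyperbolic law of cosines then gives
$$\cosh d(y,[r;\theta]) = \cosh r \cosh D - \sinh r \sinh D \cos\theta,$$
so the condition $d(y,[r;\theta]) \in [a, a+\eps]$ translates into $\cos\theta \in [u^-(r), u^+(r)]$, where
$$u^-(r) = \frac{\cosh r \cosh D - \cosh(a+\eps)}{\sinh r \sinh D}, \qquad u^+(r) = \frac{\cosh r \cosh D - \cosh a}{\sinh r \sinh D}.$$
By the mean-value theorem, the width satisfies $u^+(r) - u^-(r) \leq \eps \, \sinh(a+\eps) / (\sinh r \, \sinh D)$.

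Next I would express the area of the intersection as
$$|R_a^{a+\eps}(x) \cap R_a^{a+\eps}(y)| = \int_a^{a+\eps} \sinh r \cdot |\Theta(r)| \, \mathrm{d}r,$$
where $\Theta(r) := \{\theta \in [0, 2\pi) : \cos\theta \in [u^-(r), u^+(r)] \cap [-1,1]\}$, and bound the angular measure via the elementary estimate $\arccos(A) - \arccos(B) \leq 4\sqrt{B-A}$ for $-1 \leq A \leq B \leq 1$ (proved by writing the left-hand side as $\int_A^B \mathrm{d}t/\sqrt{1-t^2}$ and splitting at $t=0$). This yields $|\Theta(r)| \leq 8\sqrt{u^+(r) - u^-(r)} \leq C_K \sqrt{\eps / (\sinh r \, \sinh D)}$ with a constant $C_K$ depending only on $K$.

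To conclude, I would observe that the triangle inequality forces the intersection to be empty whenever $d(x,y) > 2(a+\eps)$, since both rings are contained in balls of radius $a+\eps$ around their centers. We may therefore assume $D \leq 2(a+\eps)$, which combined with $D \geq K^{-1}$ gives $a \geq K^{-1}/4$ for $\eps$ small, and hence $\sinh r \geq \sinh(K^{-1}/4) =: c_K > 0$ uniformly for $r \in [a, a+\eps]$. Plugging this into the previous bound yields $|\Theta(r)| = O(\sqrt{\eps})$ uniformly, and combined with $\int_a^{a+\eps} \sinh r \, \mathrm{d}r \leq \sinh(K+1) \cdot \eps = O(\eps)$ this gives
$$|R_a^{a+\eps}(x) \cap R_a^{a+\eps}(y)| = O(\eps^{3/2}) = o(\eps),$$
uniformly in $(x,y,a)$ in the stated regime.

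The main obstacle is the near-tangential configuration $D \approx 2a$, where the circles $\partial B_a(x)$ and $\partial B_a(y)$ almost touch, the endpoint $u^+(r)$ approaches $1$, and the change of variables $\theta \mapsto \cos\theta$ degenerates. This degeneracy is precisely what forces the square-root loss in the bound on $|\Theta(r)|$ and produces the suboptimal exponent $3/2$ instead of the transverse-case exponent $2$. Fortunately, any exponent strictly above $1$ is more than enough for the lemma.
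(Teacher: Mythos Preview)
Your proof is correct and follows essentially the same route as the paper: reduce by M\"obius invariance to $x=0$, $y=[D;0]$, use polar coordinates and the hyperbolic law of cosines to confine $\cos\theta$ to an interval of length $O_K(\eps)$, convert this into an angular measure of $O_K(\sqrt{\eps})$, and integrate over $r\in[a,a+\eps]$ to obtain the $O(\eps^{3/2})$ bound. Your explicit $\arccos$ inequality is a slightly more detailed version of the paper's one-line assertion that ``$\theta$ must lie in a set of measure at most $C(K)\sqrt{\eps}$'', and your reduction to $a\geq K^{-1}/4$ via the emptiness of the intersection when $D>2(a+\eps)$ matches the paper's reduction to $a\geq 1/(2K)$.
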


\begin{proof}
	We first note that if $d(x,y)>2K+2$, then the intersection $R_a^{a+\eps}(x) \cap R_a^{a+\eps}(y)$ is empty as soon as $\eps<1$, so we may assume $d(x,y) \leq 2K+2$. For the same reason, since $d(x,y) \geq K^{-1}$, we may assume $a \geq \frac{1}{2K}$. Moreover, by invariance under M\"obius transformations, we may assume that $x=0$ and that $y=[d;0]$ in polar coordinates, with $K^{-1} \leq d \leq 2K+2$.
	
	Let us express $R_a^{a+\eps}(0) \cap R_a^{a+\eps}(y)$ in polar coordinates. First, if $z=[r;\theta] \in R_a^{a+\eps}(0) \cap R_a^{a+\eps}(y)$, then we must have $r \in [a,a+\eps]$. Second, by the hyperbolic law of cosines, we have
	\[ \cosh(d(y,z)) = \cosh(d) \cosh(r) - \cos(\theta) \sinh(d) \sinh(r). \]
	Since $a \leq d(y,z) \leq a+\eps$, we deduce
	\[  \frac{\cosh(d) \cosh(r) - \cosh(a+\eps)}{\sinh(d) \sinh(r)} \leq \cos(\theta) \leq \frac{\cosh(d) \cosh(r) - \cosh(a)}{\sinh(d) \sinh(r)}. \]
	That is, $\cos (\theta)$ lies in an interval $I(a,d,r,\eps)$ of length $\frac{\cosh(a+\eps)-\cosh(a)}{\sinh(d) \sinh(r)} \leq K \frac{\eps \sinh(a+\eps)}{\sinh(r)} \leq C(K) \eps$ by the assumption $r \geq \frac{1}{2K}$. This implies that $\theta$ must lie in a set $S(a,d,r,\eps) \subset [0,2\pi]$ of measure at most $C(K) \sqrt{\eps}$. Therefore, we have
	\begin{align*}
	\left| R_a^{a+\eps}(x) \cap R_a^{a+\eps}(y) \right| &= \int_a^{a+\eps} |S(a,d,r,\eps)| \sinh(r) \, \mathrm{d}r\\
	&\leq \sinh(a+\eps) C(K) \eps^{3/2} \leq \sinh(K+1) C(K) \eps^{3/2},
	\end{align*}
	which proves the lemma.
\end{proof}

\begin{proof}[Proof of Proposition~\ref{prop:perimeter}] In the rest of the proof we write  $ \partial C \equiv \partial \mathrm{Vor}_\lambda( \mathcal{S})$ to lighten notation. For all $\eps>0$, we denote by $\partial^{\eps} C$ the set of points of $\mathcal{S}$ lying at hyperbolic distance at most $\eps$ from $ \partial C$. Since $\partial C$ is a.s. the union of finitely many geodesic segments, we have $$ |\partial C| = \lim_{ \varepsilon \to 0} \frac{1}{2\eps} |\partial^{\eps}C|.$$	It follows from  Fatou's lemma that
	\begin{equation}\label{eqn_thickening_boundary}
	\E \left[ |\partial C| \right] \leq \liminf_{\eps \to 0} \frac{1}{2\eps} \E \left[ |\partial^{\eps}C| \right] = \liminf_{\eps \to 0} \frac{1}{2\eps} \int_{\mathcal{S}} \P \left( x \in \partial^{\eps}C \right) \, \mu_{\mathcal{S}}(\mathrm{d}x).
	\end{equation}
Hence, let $0<\eps<1$ and $x \in \mathcal{S}$, and let $i_1$ be such that that $C(x)=C_{i_1}$. We first note that if $x \in \partial^{\eps} C$, then there are at least two Voronoi cells which intersect the ball $B_{\eps}(x)$, and one of these cells is $C_{i_1}$. Hence, there is an $i_2\ne i_1$ such that the bisector between $X_{i_1}$ and $X_{i_2}$ intersects $B_{\eps}(x)$. Therefore we introduce, for any point $y$, the set $A^{\eps}_{\mathcal{S}}(x,y) \subset \mathcal{S}$ consisting of those  points $z$ such that $d(x,z) \geq d(x,y)$ and such that the bisector between $y$ and $z$ intersects the closed ball $B_{\eps}(x)$. We first note that if
	$z\in A^{\eps}_ \mathcal{S}(x,y)$,
	then by the triangle inequality, we have\footnote{Using this inequality to crudely bound $|A^{\eps}_{\mathcal{S}}(x,y)|$, we would obtain Proposition~\ref{prop:perimeter} (and therefore Theorem~\ref{thm:main}) with a constant $1$ instead of $\frac{2}{\pi}$. This is why we will need the more accurate description given by Lemma~\ref{lem_description_A_domain}.}
	\begin{equation}\label{eqn_couronne}
	d(x,y) \leq d(x,z) \leq d(x,y)+2\eps.
	\end{equation}
	The event $x \in \partial^{\eps}C$ is equivalent to saying that at least one other point of the Poisson process lands in the region $A^{\eps}_{\mathcal{S}}(x,X_{i_1})$. Therefore, by conditioning on the point $X_{i_1}$ closest to $x$, we have
	\begin{align}\label{eqn_proba_near_bdry_as_integral}
	\P \left( x \in \partial^{\eps}C \right) &= \lambda \int_ \mathcal{S}  \mu_{\mathcal{S}}(\mathrm{d}y) \  \exp \left(- \lambda |B_{d(x,y)}(x)|\right) \times \left( 1-\exp \left( -\lambda |A^{\eps}_ \mathcal{S}(x,y)| \right) \right)  \\
	&\leq \lambda^2 \int_\mathcal{S} \mu_{\mathcal{S}}(\mathrm{d}y) \ \exp \left(- \lambda |B_{d(x,y)}(x)|\right)  |A^{\eps}_\mathcal{S}(x,y)|.
	\end{align}
If the injectivity radius at $x\in \mathcal{S}$ is much larger than $d(x,y)$, then $|A^{\eps}_ \mathcal{S}(x,y)|$ can be computed as in the hyperbolic plane (and one would recover the estimates of Isokawa \cite{isokawa2000some}). In the general case, we will work with the Dirichlet domain  $D(\mathcal{S},x) \subset \H$ defined above. We recall that $D(\mathcal{S},x)$ is a fundamental domain for the projection $p : \H \to S$ with $p(0)=x$. 
We claim that for $y \in \mathcal{S}$, the subset $A^{\eps}_ \mathcal{S}(x,y)$ of $\mathcal{S}$ is very close to the subset $A^{\eps}_{D( \mathcal{S},x)}(0,p^{-1}(y))$ of $\mathbb{H}$.
	
	\begin{lemma}\label{lem_A_surface_and_domain}
		For $x,y \in  \mathcal{S}$, we have
		\[ \left| A^{\eps}_\mathcal{S}(x,y) \right| \leq \left| A^{\eps}_{D(\mathcal{S},x)}(0,p^{-1}(y)) \right| + o(\eps) \]
as $\eps \to 0$, uniformly in $x,y \in \mathcal{S}$.
	\end{lemma}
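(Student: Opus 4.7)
The plan is to transport everything from $\mathcal{S}$ to the Dirichlet domain $D(\mathcal{S}, x) \subset \H$ via the measure-preserving bijection $p^{-1}$. Write $y_1 = p^{-1}(y)$ and $a = d_\mathcal{S}(x,y) = d_\H(0, y_1)$, and for each $z \in A^\eps_\mathcal{S}(x,y)$ set $z_1 = p^{-1}(z) \in D(\mathcal{S}, x)$. Since $p$ preserves area, it suffices to prove that $\tilde A := p^{-1}(A^\eps_\mathcal{S}(x,y))$ is contained in $A^\eps_{D(\mathcal{S},x)}(0, y_1) \cup E$ with $|E| = o(\eps)$. The distance constraint $d_\mathcal{S}(x,z) \geq d_\mathcal{S}(x,y)$ transports directly to $d_\H(0, z_1) \geq d_\H(0, y_1)$ from the defining property of the Dirichlet domain.

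The nontrivial task is to transport the bisector condition. Given a witness $w \in B_\eps(x)$ with $d_\mathcal{S}(w,y) = d_\mathcal{S}(w,z)$, lift it to $w_1 = p^{-1}(w) \in B_\eps(0) \cap D(\mathcal{S}, x)$ and write $d_\mathcal{S}(w,y) = d_\H(w_1, \gamma_y y_1)$, $d_\mathcal{S}(w,z) = d_\H(w_1, \gamma_z z_1)$ where $\gamma_y, \gamma_z$ are deck transformations realizing the respective closest lifts. The identity $d_\H(w_1, \gamma_y y_1) = d_\H(w_1, \gamma_z z_1)$ then holds in $\H$.

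I would split according to $(\gamma_y, \gamma_z)$. When $\gamma_y = \gamma_z = \mathrm{id}$ the point $w_1 \in B_\eps(0)$ lies on the bisector of $y_1$ and $z_1$ in $\H$, so $z_1 \in A^\eps_{D(\mathcal{S},x)}(0, y_1)$ as desired. Any other case produces a nontrivial deck transformation; assume for concreteness $\gamma_z \neq \mathrm{id}$. Combining the minimality $d_\H(w_1, \gamma_z z_1) \leq d_\H(w_1, z_1)$, the Dirichlet condition $d_\H(0, z_1) \leq d_\H(0, \gamma_z z_1)$, and $w_1 \in B_\eps(0)$, the triangle inequality forces $d_\H(0, \gamma_z z_1) \in [d_\H(0, z_1), d_\H(0, z_1)+2\eps]$. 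Since $d_\H(0, z_1) \in [a, a+2\eps]$ by~\eqref{eqn_couronne}, we conclude that $z_1 \in R_a^{a+2\eps}(0) \cap R_a^{a+4\eps}(\gamma_z^{-1} 0)$, the intersection of two thin rings around distinct centers.

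By Lemma~\ref{lem_thinrings} each such intersection has area $o(\eps)$, and only finitely many $\gamma_z$ with $d_\H(0, \gamma_z 0) \leq 2a + O(\eps)$ need to be summed, so the contribution to $|E|$ from this case is $o(\eps)$. The symmetric case $\gamma_y \neq \mathrm{id}, \gamma_z = \mathrm{id}$ is treated analogously by placing the thin-ring constraint on a lift of $y$; handling this second case uniformly in $y$—especially when $y_1$ lies near a face of $D(\mathcal{S},x)$—is where I expect the main technical difficulty, and I would resolve it by exploiting the near-equidistance $d_\H(w_1, y_1) \approx d_\H(w_1, \gamma_y y_1)$ forced by the constraint on $y_1$, which pulls the relevant bisector back inside $B_{O(\eps)}(0)$ up to a further $o(\eps)$ error.
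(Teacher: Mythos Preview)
Your overall strategy matches the paper's exactly: push everything into the Dirichlet domain, show that any $z_1$ escaping $A^\eps_{D(\mathcal{S},x)}(0,y_1)$ must lie in an intersection of two thin annuli centred at $0$ and at a nontrivial translate $\gamma^{-1}\cdot 0$, and then invoke Lemma~\ref{lem_thinrings}. Your treatment of the case $\gamma_z\neq\mathrm{id}$ is correct and is essentially the paper's computation (the paper moves the deck transformation to the lift of the witness point rather than to $z_1$, but this is the same thing).

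Where you go astray is in the case $\gamma_y\neq\mathrm{id}$, $\gamma_z=\mathrm{id}$, which you flag as the ``main technical difficulty''. In fact this case is empty: whenever $\gamma_z=\mathrm{id}$ you already have
\[
d_\H(w_1,z_1)=d_{\mathcal{S}}(w,z)=d_{\mathcal{S}}(w,y)\leq d_\H(w_1,y_1),
\]
so $w_1$ lies weakly on the $z_1$-side of the $\H$-bisector of $y_1$ and $z_1$, while $0$ lies weakly on the $y_1$-side (since $d_\H(0,z_1)\geq d_\H(0,y_1)$). Hence that bisector meets the segment $[0,w_1]\subset B_\eps(0)$, and $z_1\in A^\eps_{D(\mathcal{S},x)}(0,y_1)$ directly. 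No thin-ring argument is needed here, and your proposed fix (constraining lifts of $y$) would not have worked anyway, since it bounds the location of $y_1$ rather than the measure of admissible $z_1$.

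The paper sidesteps the whole issue by never introducing $\gamma_y$: it uses only the chain $d_{\mathcal{S}}(a,z)\leq d_{\mathcal{S}}(a,y)\leq d_\H(a',y')< d_\H(a',z')$ (the last inequality coming from $z'\notin A^\eps_{D}$), which forces the deck transformation realising $d_{\mathcal{S}}(a,z)$ to be nontrivial. Once you drop the split on $\gamma_y$, your argument is complete and coincides with the paper's.
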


	\begin{proof}
		Let us fix $x$ and $y$, and write $r=d_\mathcal{S}(x,y)$. Since $p^{-1}:\mathcal{S} \to D(\mathcal{S},x)$ is measure-preserving, it is sufficient to prove
		\begin{equation}\label{eqn_bad_set_Dirichlet}
		\left| p^{-1} \left( A^{\eps}_\mathcal{S}(x,y) \right) \backslash A^{\eps}_{D(\mathcal{S},x)}(0,p^{-1}(y)) \right| = o(\eps),
		\end{equation}
		uniformly in $(x,y) \in \mathcal{S}^2$. We fix a Fuchsian group $G$ such that $\mathcal{S}=G\backslash\H$. Moreover, to avoid heavy notation involving the projection map, we will use $w'\in D(\mathcal{S},x)$ to denote the unique (up to a set of measure $0$) pre-image of $w\in\mathcal{S}$ under $p$. In particular, $x'=0$.
		
		Now assume that $\eps < \frac{1}{2} \mathrm{Systole}(\mathcal{S})$ so that $B_{\eps}(0) \subset D(\mathcal{S},x)$, and let $z'$ be in the difference of sets of~\eqref{eqn_bad_set_Dirichlet}. This implies that there is a point $a\in B_{\eps}(x)$ such that
		$$
		d_{\mathcal{S}}(a,z) < d_{\mathcal{S}}(a,y) \quad \text{but} \quad d_{\H}(a',z') > d_{\H}(a',y').
		$$
		Now let $g \cdot a'$ (for some $g \in G$) be the translate of $a'$ such that $d_{\mathcal{S}}(a,z)=d_{\H}(g \cdot a', z')$. The last display means that $d_{\H}(g\cdot a',z') < d_{\H}(a',y)$. Observe that $g \cdot a'\in B_\eps(g\cdot x')$. We get
		\begin{equation*}
		d_{\H}(g\cdot x',z') \leq d_{\H}(g\cdot x',g\cdot a') +   d_{\H}(g\cdot a',z')  \leq \eps + d_{\mathcal{S}}(a,z) \leq 2\eps +d_{\mathcal{S}}(x,z) \leq r+4\eps,
		\end{equation*}
  	where we have used \eqref{eqn_couronne} for the last inequality. On the other hand, by the definition of the Dirichlet domain, we have
  	$$
		d_{\H}(g\cdot x',z') \geq d_{\H}(x',z') = d_{\mathcal{S}}(x,z) \geq d_{\mathcal{S}}(x,y)=r.
	$$
	Finally, we have 
	$$
	d_{\H}(x',g\cdot x') \leq d_{\H}(x',z') + d_{\H}(z',g\cdot x') = d_{\mathcal{S}}(x,z) + d_{\H}(z',g\cdot x')  \leq 2r+6\eps,
	$$
	where we have used \eqref{eqn_couronne} again.
	Putting the last three inequalities together, we find that the set described in the left-hand side of~\eqref{eqn_bad_set_Dirichlet} is contained in
		\[ \bigcup_{\substack{w\in G\cdot 0 \setminus\{0\} \\ d_{\H}(0,w) \leq 2r+1}} \left\{ z' \in \H | \begin{array}{c} r \leq d(w,z') \leq r+4\eps \text{ and} \\ r \leq d(0,z') \leq r + 2\eps \end{array} \right\}.\]
	The sets in this union are intersections of two annuli of width $4\eps$, centered around $0$ and a translate $w$ of $0$. Such an intersection has area $o(\eps)$, by Lemma \ref{lem_thinrings}. Note that in order to apply this lemma, we use the fact that $r\leq \mathrm{Diameter}(\mathcal{S})$ and that a translate $w = g\cdot 0$ satisfies $d(w,0) \geq \mathrm{Systole}(\mathcal{S})$.
	
	Finally, the number of sets in the union can be bounded in terms of $r$, and hence in terms of $\mathrm{Diameter}(\mathcal{S})$. Indeed, two points in $G\cdot 0$ are least $\mathrm{Systole}(S)$ apart and as such only so many of them fit in a disk of radius $2r+1$. This concludes the proof of Lemma~\ref{lem_A_surface_and_domain}.
	\end{proof}

	The next step is to give a precise description of the set $A^{\eps}_{D(\mathcal{S},x)} (0,y)$. It will be particularly natural to express this description in polar coordinates. For all $r>0$, we denote by $I_r(x)$ the set of angles $\theta \in [0,2\pi)$ such that the point $[r;\theta]$ belongs to $D(\mathcal{S},x)$. We note that $I_r(x)$ is a finite union of intervals, and that $I_{r_2}(x) \subset I_{r_1}(x)$ when $r_1 \leq r_2$ by convexity of $D(\mathcal{S},x)$.
Moreover, the area measure on $D(\mathcal{S},x)$ can be written as $\sinh(r) \mathbbm{1}_{\theta \in I_r(x)} \, \mathrm{d}r \, \mathrm{d}\theta$. Then we have the following good approximation of $A^{\eps}_{D(\mathcal{S},x)}(0,y)$ (see also Figure~\ref{fig_A_on_Dirichlet_domain}).

\begin{figure}[!h]
 \begin{center}
 \includegraphics[height=5cm]{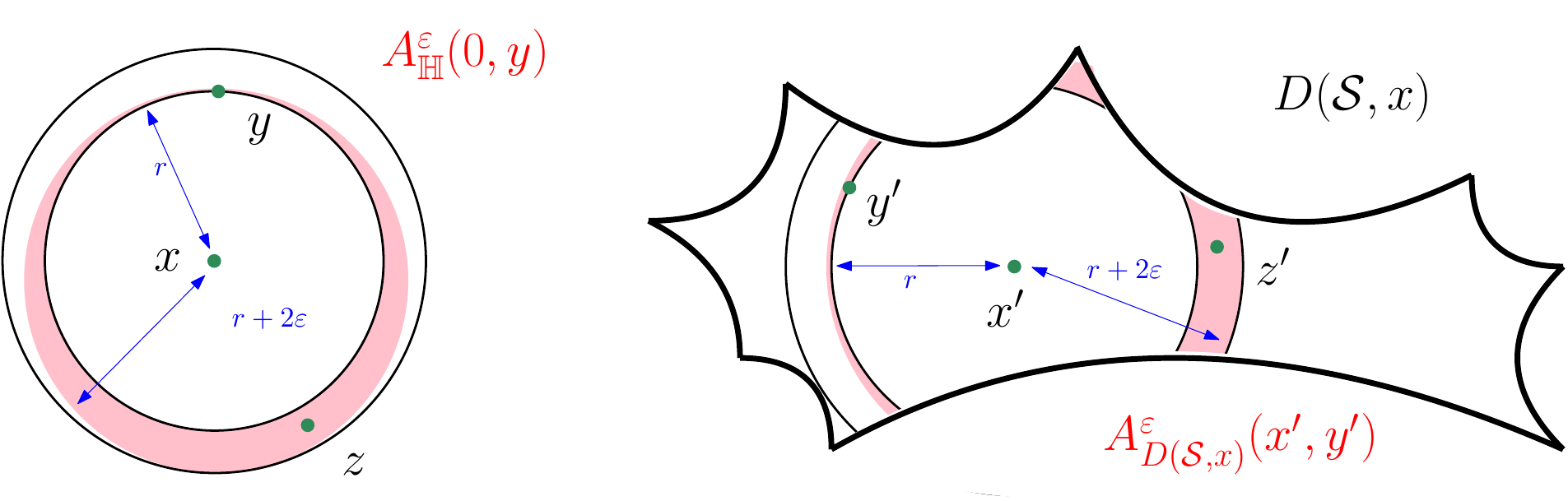}
 \caption{On the left, in pink the set $ A^{{ \varepsilon}}_{ \mathbb{H}}(x,y)$, see Lemma \ref{lem_description_A_domain}. On the right the set $ A^{{ \varepsilon}}_{ D( \mathcal{S},x)}(x',y')$ which by Lemma \ref{lem_A_surface_and_domain} is a very good approximation of $ A^{ \varepsilon}_{ \mathcal{S}}(x,y)$. \label{fig_A_on_Dirichlet_domain}}
 \end{center}
 \end{figure}	
	
	\begin{lemma}\label{lem_description_A_domain}
		Let $\delta>0$. Then there are $r_0(\delta)>1$ and $\eps_0(\delta)>0$ with the following property. For all $0<\eps<\eps_0(\delta)$ and any point $y = [r;\theta] \in \H$ such that $d(0,y)=r>r_0(\delta)$, we have the inclusion
		\begin{equation}\label{eqn_A_inclusion_planar}
		A^{\eps}_{\H}(0,y) \subset \left\{ [r';\theta'] \in \H | r \leq r' \leq r+2(1+\delta) \left| \sin \frac{\theta'-\theta}{2} \right| \eps \right\}.
		\end{equation}
		In particular, under those assumptions, if $x \in \mathcal{S}$ and $y \in D(\mathcal{S},x)$, we have
		\[ \left| A^{\eps}_{D(\mathcal{S},x)}(0,y) \right| \leq 2(1+\delta)\eps \sinh(r+3\eps) \int_{I_r(x)} \left| \sin \frac{\theta'-\theta}{2} \right| \, \mathrm{d}\theta'.\]
	\end{lemma}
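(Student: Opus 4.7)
My plan is to translate the defining condition of $A^\eps_\H(0,y)$ -- that the perpendicular bisector of $\{y,z\}$ meets the ball $B_\eps(0)$ -- into an explicit hyperbolic trigonometric inequality, optimize over the (implicit) intersection point, and expand to leading order in $\eps$. By M\"obius invariance I may assume $y = [r;0]$; any $z \in A^\eps_\H(0,y)$ can then be written $z = [r';\theta']$ with $r' \geq r$, so setting $s := r'-r \geq 0$ the task is to show $s \leq 2(1+\delta)\eps|\sin(\theta'/2)|$ under the assumptions $r > r_0(\delta)$ and $\eps < \eps_0(\delta)$.

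The existence of some $a = [\eta;\gamma]$ with $\eta \leq \eps$ satisfying $d(a,y) = d(a,z)$ is equivalent, by the hyperbolic law of cosines applied in the triangles $(0,a,y)$ and $(0,a,z)$, to the identity
\[ \cosh\eta\,(\cosh r' - \cosh r) \;=\; \sinh\eta\,\bigl[\sinh r' \cos(\theta' - \gamma) - \sinh r \cos\gamma\bigr]. \]
Maximizing the bracketed expression over $\gamma$ is routine and yields $\sqrt{\sinh^2 r' - 2\sinh r \sinh r' \cos\theta' + \sinh^2 r}$. Writing $t := \sinh r'/\sinh r \geq 1$, this radicand equals $\sinh^2 r \cdot \bigl((t-1)^2 + 4t\sin^2(\theta'/2)\bigr)$. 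A small-$s$ expansion gives $t - 1 = s\coth r \,(1+O(s))$ and $\cosh r' - \cosh r = s\sinh r\,(1+O(s))$; squaring the resulting inequality and rearranging produces
\[ s^2\bigl(1 - \eps^2\coth^2 r\bigr)(1+O(s)) \;\leq\; 4\eps^2\sin^2(\theta'/2)\,(1+O(s)). \]
Choosing $r_0(\delta)$ so large that $\coth r_0 - 1$ is sufficiently small, and $\eps_0(\delta)$ sufficiently small, forces the prefactor to be $\leq (1+\delta)^2$, whence $s \leq 2(1+\delta)\eps|\sin(\theta'/2)|$; this is exactly the inclusion~\eqref{eqn_A_inclusion_planar}.

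The second assertion of the lemma then follows by Fubini in polar coordinates. By convexity of the Dirichlet domain, any $[r';\theta'] \in D(\mathcal{S},x)$ with $r' \geq r$ satisfies $\theta' \in I_{r'}(x) \subset I_r(x)$. Integrating the radial variable over the thin slice $[r,\,r + 2(1+\delta)\eps|\sin((\theta'-\theta)/2)|]$ produces the factor $2(1+\delta)\eps|\sin((\theta'-\theta)/2)|$, and bounding $\sinh r'$ above by $\sinh(r + 3\eps)$ throughout the slice -- legitimate since $2(1+\delta) \leq 3$ for $\delta \leq 1/2$ -- yields the claimed upper bound on $|A^\eps_{D(\mathcal{S},x)}(0,y)|$.

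The main obstacle lies in the third display: when $|\sin(\theta'/2)|$ is very small the inequality forces $s$ to be correspondingly small, and the crude substitution $s \leq C\eps$ used to tame the $(1+O(s))$ errors becomes lossy. The careful version compares $s$ to $\eps|\sin(\theta'/2)|$ throughout the expansion, and splits the error terms into those controlled by $\eps$ alone and those controlled by $\coth r - 1$, which can be made arbitrarily small thanks to the hypothesis $r > r_0(\delta)$. Getting the constant exactly $2/\pi$ in the final Proposition~\ref{prop:perimeter} (via Lemma~\ref{lem_exo_colle}) depends on this $(1+\delta)$ factor being genuinely $1+o(1)$, so this bookkeeping is not cosmetic.
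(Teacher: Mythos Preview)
Your proof is correct and follows essentially the same route as the paper. Both arguments apply the hyperbolic law of cosines to the triangles $(0,a,y)$ and $(0,a,z)$, optimize over the angular position of $a$ (the paper phrases this as a Cauchy--Schwarz step, you as a direct maximization over $\gamma$; they are the same computation), square, and absorb the subleading term $\eps^2(\sinh r'-\sinh r)^2$ into the dominant term $(\cosh r'-\cosh r)^2$ using that their ratio is $O(\eps^2)$ for large $r$. The derivation of the area bound via polar coordinates and the monotonicity $I_{r'}(x)\subset I_r(x)$ is likewise identical. Your final paragraph slightly overstates the difficulty: the small-$|\sin(\theta'/2)|$ regime requires no separate treatment, since the term you discard is bounded by $\eps^2(1+o(1))$ times the \emph{right-hand side} $(\cosh r'-\cosh r)^2$, not by a multiple of the sine term, so the argument is uniform in $\theta'$.
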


\begin{proof}
	This is just a calculation using hyperbolic trigonometry. We write $z=[r';\theta'] \in A^{\eps}_{\H}(0,y)$. We want to prove that $r' \leq r+\left( 2(1+\delta) \left| \sin \frac{\theta'-\theta}{2} \right| \right) \eps$.
	Because $z \in A^{\eps}_{\H}(0,y)$, there is a point $a$ of the form $[\eps;\varphi]$ with $\varphi \in [0,2\pi)$ which is closer to $z$ than to $y$. Using the hyperbolic cosine law, we can write down the distances $d(a,y)$ and $d(a,z)$ in terms of $r$, $r'$, $\eps$, $\theta$, $\theta'$ and $\varphi$. We find
	\[ \cosh(r') \cosh(\eps) - \cos(\varphi-\theta') \sinh(r') \sinh(\eps) \leq \cosh(r) \cosh(\eps) - \cos(\varphi-\theta) \sinh(r) \sinh(\eps), \]
	or equivalently
	\begin{equation}\label{eqn_find_phi}
	\cos(\varphi-\theta') \sinh(r')-\cos(\varphi-\theta) \sinh(r) \geq \coth(\eps) \left( \cosh(r')-\cosh(r) \right) \geq \frac{\cosh(r')-\cosh(r)}{\eps}.
	\end{equation}
	Moreover, by the Cauchy--Schwarz inequality, we have
	\begin{align*}
	& \cos(\varphi-\theta') \sinh(r')-\cos(\varphi-\theta) \sinh(r)\\
	&= \left( \cos (\theta') \sinh(r') - \cos(\theta) \sinh(r) \right) \cos(\varphi) +\left( \sin (\theta') \sinh(r') - \sin(\theta) \sinh(r) \right) \sin(\varphi)\\
	&\leq \sqrt{\left( \cos (\theta') \sinh(r') - \cos(\theta) \sinh(r) \right)^2 + \left( \sin (\theta') \sinh(r') - \sin(\theta) \sinh(r) \right)^2}\\
	&= \sqrt{\sinh^2(r')+\sinh^2(r)-2\cos(\theta'-\theta) \sinh(r) \sinh(r')}\\
	&= \sqrt{\left( \sinh(r')-\sinh(r) \right)^2 + 2(1-\cos (\theta'-\theta)) \sinh(r) \sinh(r')}.
	\end{align*}
	Hence~\eqref{eqn_find_phi} squared becomes
	\[ \eps^2 \left( \sinh(r')-\sinh(r) \right)^2 + 4 \eps^2 \sin^2 \frac{\theta'-\theta}{2} \sinh(r) \sinh(r') \geq \left( \cosh(r')-\cosh(r) \right)^2. \]
	If $\eps$ is small enough and $r$ large enough (depending only on $\delta$), the first term in the left-hand side is small compared to the right-hand side, so we have
	\[ 4 \eps^2 \sin^2 \frac{\theta'-\theta}{2} \sinh(r) \sinh(r') \geq \left( 1+\delta/2 \right)^{-2} \left( \cosh(r')-\cosh(r) \right)^2, \]
	which becomes
	\[2\eps \left| \sin \frac{\theta'-\theta}{2} \right| \geq \left( 1+\frac{\delta}{2} \right)^{-1} \frac{\cosh(r')-\cosh(r)}{\sqrt{\sinh(r) \sinh(r')}} \geq \left( 1+\frac{\delta}{2} \right)^{-1} \frac{(r'-r) \sinh(r)}{\sqrt{\sinh(r) \sinh(r')}},\]
	which implies
	\[ r'-r \leq \left( 1+\frac{\delta}{2} \right) 2\eps \left| \sin \frac{\theta'-\theta}{2} \right| \sqrt{\frac{\sinh(r')}{\sinh(r)}}. \]
	Finally, recalling $r' \leq r+2\eps$, if $r$ is large enough (depending only on $\delta$), this last expression is smaller than $(1+\delta)2\eps \left| \sin \frac{\theta'-\theta}{2} \right|$, which concludes the proof of~\eqref{eqn_A_inclusion_planar}.
	
	Let us move on to the second point. We know that $A^{\eps}_{D(\mathcal{S},x)}(0,y) \subset D(\mathcal{S},x) \cap A^{\eps}_{\H}(0,y)$. Using the expression of the area measure in polar coordinates, Equation~\eqref{eqn_A_inclusion_planar} translates into
	\begin{align*}
	\left| A_{D(\mathcal{S},x)}^{\eps}(0,y) \right| &\leq \int_0^{2\pi} \int_r^{r+2(1+\delta) |\sin \frac{\theta'-\theta}{2}|\eps} \mathbbm{1}_{\theta' \in I_{r'}(x)} \sinh(r') \, \mathrm{d}r' \, \mathrm{d}\theta'\\
	&\leq \int_{I_r(x)} \int_r^{r+2(1+\delta) |\sin \frac{\theta'-\theta}{2}|\eps} \sinh(r') \, \mathrm{d}r' \, \mathrm{d}\theta'\\
	&\leq 2(1+\delta)\eps \sinh(r+3\eps) \int_{I_r(x)} \left| \sin \frac{\theta'-\theta}{2} \right| \, \mathrm{d}\theta',
	\end{align*}
	where the second inequality uses the inclusion $I_{r'}(x) \subset I_r(x)$. This concludes the proof of Lemma~\ref{lem_description_A_domain}.
\end{proof}

	We can now finish the proof of Proposition~\ref{prop:perimeter}. We re-express~\eqref{eqn_proba_near_bdry_as_integral} as an integral over $D(\mathcal{S},x)$ (since the projection $p$ preserves the measure), and write it down in polar coordinates:
	\begin{align*}
	\P \left( x \in \partial^{\eps} C \right) &\leq \lambda^2 \int_0^{+\infty} \exp \left( -\lambda |B_r(x)| \right) \int_{I_r(x)} \left| A^{\eps}_\mathcal{S}(x,p([r;\theta])) \right| \sinh(r) \, \mathrm{d}\theta \, \mathrm{d}r \\
	&= o(\eps) + \lambda^2 \int_0^{+\infty} \exp \left( -\lambda |B_r(x)| \right) \int_{I_r(x)} \left| A^{\eps}_{D(\mathcal{S},x)}(0,[r;\theta]) \right| \sinh(r) \, \mathrm{d}\theta \, \mathrm{d}r,	
	\end{align*}
	where the $o(\eps)$ is uniform in $x$, and the last part comes from Lemma~\ref{lem_A_surface_and_domain} and the fact that $D(\mathcal{S},x)$ is bounded. We now assume that $\eps$ is smaller than the $\eps_0(\delta)$ of Lemma~\ref{lem_description_A_domain}. For $r$ larger than the $r_0(\delta)$ of Lemma~\ref{lem_description_A_domain}, we bound $\left| A^{\eps}_{D(\mathcal{S},x)}(0,[r;\theta]) \right|$ using Lemma~\ref{lem_description_A_domain}. For $r \leq r_0(\delta)$, we use the crude bound $\left| A^{\eps}_{D(\mathcal{S},x)}(0,[r;\theta]) \right| \leq 4\pi \eps \sinh(r+2\eps)$ coming from~\eqref{eqn_couronne}. We obtain
	\begin{align*}
		\P \left( x \in \partial^{\eps} C \right) &\leq o(\eps) + \lambda^2 \int_0^{r_0(\delta)}  \int_{0}^{2\pi} 4\pi \eps \sinh(r+2\eps) \sinh(r) \, \mathrm{d}\theta \, \mathrm{d}r\\
		&+ 2 \lambda^2 (1+\delta) \eps \int_{r_0}^{+\infty} \exp \left( -\lambda |B_r(x)| \right) \int_{I_r(x)^2} \left| \sin \frac{\theta'-\theta}{2} \right| \sinh(r) \sinh(r+3\eps) \, \mathrm{d}\theta \, \mathrm{d}\theta' \mathrm{d}r.
	\end{align*}
	The first integral is bounded by $C(\delta)\lambda^2 \eps$, so if $\lambda$ is chosen smaller than some $\lambda_0(\delta)$ it is smaller than $\delta \eps$. Moreover, up to increasing the value $r_0(\delta)$, we may assume $\sinh(r+3\eps) \leq (1+\delta) \sinh(r)$. By these remarks and Lemma~\ref{lem_exo_colle} to handle the integral over $I_r(x)^2$, we find
	\begin{equation}\label{eqn_proba_boundary_as_integral_in_r}
	\P \left( x \in \partial^{\eps} C \right) \leq o(\eps)+\delta \eps + \frac{4}{\pi} \lambda^2 (1+\delta)^2 \eps \int_{r_0}^{+\infty} \exp \left( -\lambda |B_r(x)| \right) \left|I_r(x) \right|^2 \sinh^2(r) \, \mathrm{d}r.
	\end{equation}
	Our goal is now to write this in a form which can be directly integrated. For this, we notice that
	\[ |I_r(x)| \sinh(r) = \left| \partial B_r(x) \right| = \frac{\mathrm{d}}{\mathrm{d}r} \left| B_r(x) \right|. \]
	Therefore, for all $r \geq r_0(\delta)$, we have
	\[  \left| B_r(x) \right| = \int_0^r |I_s(x)| \sinh(s) \, \mathrm{d}s \geq |I_r(x)|(\cosh(r)-1). \]
	Up to increasing the value $r_0(\delta)$, we may assume $\cosh(r)-1 \geq (1+\delta)^{-1} \sinh(r)$, so that $|I_r(x)| \sinh(r) \leq (1+\delta)|B_r(x)|$. Therefore, replacing one of the two factors $|I_r(x)| \sinh(r)$ in~\eqref{eqn_proba_boundary_as_integral_in_r}, we obtain
	\begin{align*}
	\P \left( x \in \partial^{\eps} C \right) &\leq o(\eps)+\delta \eps + \frac{4}{\pi} \lambda^2 (1+\delta)^3 \eps \int_{0}^{+\infty} \left( \frac{\mathrm{d}}{\mathrm{d}r} \left| B_r(x) \right| \right) \left| B_r(x) \right| \exp \left( -\lambda \left| B_r(x) \right| \right)  \, \mathrm{d}r\\
	&= o(\eps)+\delta \eps + \frac{4}{\pi} \lambda^2 (1+\delta)^3 \eps \left[ -\left( \frac{1}{\lambda^2} + \frac{\left| B_r(x) \right|}{\lambda} \right) \exp \left( -\lambda \left| B_r(x) \right| \right) \right]_{0}^{+\infty}\\
	&= o(\eps)+\delta \eps + \frac{4}{\pi} (1+\delta)^3 \eps,
	\end{align*}
	where the $o(\eps)$ is still uniform in $x$. Plugging this into~\eqref{eqn_thickening_boundary}, we obtain
	\[ \E \left[ |\partial C| \right] \leq \frac{2}{\pi} (1+\delta)^4 |\mathcal{S}| \]
	for $\lambda<\lambda_0(\delta)$, which finally proves Proposition~\ref{prop:perimeter}.
 \end{proof}
	
\begin{remark}
	Let us briefly compare our approach for Proposition~\ref{prop:perimeter} to the computations of Isokawa~\cite{isokawa2000some} in the hyperbolic plane. The main difference is that Isokawa takes a point of the Poisson process as the center of polar coordinates, whereas we center them around a typical point of $\mathcal{S}$. In particular, Isokawa does not need to consider an $\eps$-thickening of the boundary. However, if we tried to adapt the arguments of~\cite{isokawa2000some} to our Dirichlet domains, we would need to carefully study the interplay between the sets $I_r(x)$ and $I_{s}(x)$ for some $r \ne s$, which seems complex. On the other hand, our argument only uses the interplay between $I_r(x)$ and itself (this is the role of Lemma~\ref{lem_exo_colle}).
\end{remark}

\subsection{Proof of Theorem \ref{thm:main}}	
Let $ 0 < \delta < \frac{1}{2} < \frac{2}{\pi}$ and let $\mathcal{S}$ be a surface with genus $g$ and Cheeger constant larger than $\delta$ (otherwise our result is trivial). Let $\lambda>0$ be small enough so that we have by Proposition \ref{prop:perimeter}
$$\E \left[ |\partial  \mathrm{Vor}_{\lambda}( \mathcal{S})| \right] \leq \frac{2}{\pi} (1+ \delta) |\mathcal{S}|.$$
Recall that $A_{\lambda}$ is obtained by coloring each cell of $ \mathrm{Vor}_{\lambda}( \mathcal{S})$ with probability $1/2$ independently. In particular, each side of $ \partial \mathrm{Vor}_{\lambda}( \mathcal{S})$ is in $\partial A_{\lambda} $ with probability $1/2$ (conditionally on $ \mathrm{Vor}_{\lambda}( \mathcal{S})$), so 
	\[\E \left[ |\partial A_{\lambda}| \right] \leq  \frac{1}{\pi} (1+ \delta) |\mathcal{S}|.\]
	Therefore, by the Markov inequality
	\[ \P \left( |\partial A_{\lambda}| \leq \frac{1}{\pi}(1+\delta)^2  | \mathcal{S}| \right) \geq 1-\frac{(1+\delta) | \mathcal{S}|/\pi}{(1+\delta)^{2} | \mathcal{S}|/\pi}=\frac{\delta}{1+\delta}.\]
	Now suppose  that $g$ is large enough so that Lemma~\ref{lem_LLN_Voronoi_general} holds with $\delta$ replaced by $\delta^{2}$. In this case,  since $ \frac{\delta}{1+\delta} + (1- \delta^{2})>1$ there is a positive probability that both $(1-\delta) \frac{| \mathcal{S}|}{2} \leq |A_{\lambda}| \leq (1+\delta) \frac{| \mathcal{S}|}{2}$ and $|\partial A_{\lambda}| \leq \frac{1}{\pi}(1+\delta)^2 | \mathcal{S}|$. This implies
	\[ h( \mathcal{S}) \leq \frac{(1+\delta)^2}{1-\delta} \frac{2}{\pi}, \]
	which proves Theorem~\ref{thm:main}. Et voil\`a.

\bibliographystyle{siam}
\bibliography{bib_hyperbolic.bib}

\end{document}